\pgfplotsset{compat=1.15}
\definecolor{DimGray}{rgb}{0.41, 0.41, 0.41}
\definecolor{zzttqq}{rgb}{0.6,0.2,0.}
\definecolor{uuuuuu}{rgb}{0.26666666666666666,0.26666666666666666,0.26666666666666666}
\definecolor{ududff}{rgb}{0.30196078431372547,0.30196078431372547,1.}
\definecolor{xdxdff}{rgb}{0.49019607843137253,0.49019607843137253,1.}
\newtheorem{theorem}{Theorem}[section]
\theoremstyle{definition}
\newtheorem{defin}[theorem]{Definition}
\newtheorem{question}[theorem]{Question}
\newtheorem{ex}[theorem]{Example}
\theoremstyle{plain}
\newtheorem{proposition}[theorem]{Proposition}
\newtheorem*{theorem*}{Theorem}
\newtheorem{lemma}[theorem]{Lemma}
\newtheorem{corollary}[theorem]{Corollary}
\newtheorem{conj}[theorem]{Conjecture}
\theoremstyle{remark}
\newtheorem{rem}[theorem]{Remark}
\numberwithin{equation}{section}
\numberwithin{figure}{section}
\title[Wilf's conjecture for semimodules]{An extension of the Wilf conjecture to semimodules over a numerical semigroup}
\author{Patricio Almir\'on}
\author{Julio-Jos\'e Moyano-Fern\'andez}
\subjclass[2010]{Primary: 20M14; Secondary: 05A19.}
\keywords{Numerical semigroup, Frobenius problem, $\Gamma$-semimodule, syzygy}
\thanks{The first author was partially supported by Spanish Goverment, Ministerios de Ciencia e Innovaci\'on y de Universidades MTM2016-76868-C2-1-P. The second author was partially supported by the Spanish Government, Ministerios de Ciencia e Innovaci\'on y de Universidades, grant PGC2018-096446-B-C22, as well as by Universitat Jaume I, grant UJI-B2018-10. Both authors contributed equally to this work.}
\address{Instituto de Matemática interdisciplinar (IMI) y departamento de \'{A}lgebra, Geometr\'{i}a y Topolog\'{i}a\\
Facultad de Ciencias Matem\'{a}ticas\\
Universidad Complutense de Madrid\\
28040, Madrid, Spain.}
\email{palmiron@ucm.es}
\address{Universitat Jaume I, Campus de Riu Sec, Departamento de Matem\'aticas \& Institut Universitari de Matem\`atiques i Aplicacions de Castell\'o, 12071
Caste\-ll\'on de la Plana, Spain}
\email{moyano@uji.es}
\begin{document}

\begin{abstract}
The aim of this paper is to propose an extension of the Wilf conjecture to semimodules over a numerical semigroup through a new approach toward the solution of the Wilf conjecture on numerical semigroups. 
The key point is the introduction of a new invariant that we call Wilf function of a semigroup that can be also defined for a semimodule over the semigroup. We also study the relation of the Wilf function of a semigroup and the Wilf function of certain semimodules associated to a gap of the semigroup. As a consequence, we obtain an alternative conjecture which also implies the Wilf conjecture. Finally, we show that the natural generalization of Wilf conjecture to semimodules over the semigroup does not work unless we used the Wilf function of the semimodule. In this direction, we propose several questions regarding the possible extensions of Wilf-type inequalities for semimodules.
\end{abstract}

\maketitle

\section{Introduction}

The ``money-changing problem'' asks for those sums of money we can change if we have coins of \(e\) different values, say \(a_1,\dots, a_e\). In 1978, H. Wilf \cite{wilf} presented an algorithm to solve this problem; he named it ``circle-of-lights algorithm''. The idea is quite basic: we consider a circle of \(a_e\) lights labeled as \(0,1,\dots,a_e\), where all of them are ``off'' except for the one representing \(0\). We start turning on lights clockwise. During the first round we turn on the lights corresponding to the values \(\{a_1,a_2,\dots,a_e\}\). After that, the algorithm runs turning on bulbs representing the possible combinations of numbers \(x=x_0a_0+\cdots+ x_ea_e\). Observe that the representation of \(x\) is given modulo \(a_e\). If the possible values of coins \(\{a_1,a_2,\dots,a_e\}\) are coprime, then the algorithm terminates after a finite number of steps. In this case the ``money-changing problem'' can be easily translated in terms of numerical semigroups.
\medskip

A numerical semigroup $\Gamma$ is an additive submonoid of the natural numbers of finite complement. The elements in this complement are said to be the gaps of the semigroup, and the number $g(\Gamma)$ of all of them is called the genus of $\Gamma$. The fact that $g(\Gamma)<\infty$ implies that $\Gamma$ is finitely generated, and it is not difficult to see that every numerical semigroup has a unique system of minimal generators; its cardinality is called the embedding dimension of $\Gamma$, written $e(\Gamma)$. Observe that in the ``circle-of-lights algorithm'' the minimal generators of the semigroup correspond to coin values \(\{a_1,\dots,a_e\}\) and the bulbs that are ``off'' correspond to the gaps of the semigroup generated by the possible values of coins (see \cite[II, p. 563]{wilf}). The biggest gap with respect to the usual ordering is called the Frobenius number of $\Gamma$, say $F(\Gamma)$, and $c(\Gamma):=F(\Gamma)+1$ is called the conductor of $\Gamma$. Finally, it is also customary to consider the delta-invariant of $\Gamma$, which is defined to be the cardinality $\delta(\Gamma)$ of the set $\{x\in \Gamma : x<F(\Gamma)\}$. 
\medskip

Numerical semigroups became notorious in Mathematics due to the Frobenius problem, which asks for a closed formula for the Frobenius number or, equivalently, for the conductor of a numerical semigroup. In particular, the \textquotedblleft circle-of-lights algorithm\textquotedblright ~gives a partial answer to Frobenius problem, i.e. given a set of generators of the semigroup this is an algorithm which computes the conductor of the semigroup (see \cite[p. 564]{wilf}). Moreover, the \textquotedblleft circle-of-lights algorithm\textquotedblright ~ computes \(c(\Gamma)\) with time complexity \(O(e(\Gamma)c(\Gamma))\) (see \cite[p. 833]{ninjen}). Based on this complexity H. Wilf \cite{wilf} posed the following question: is there a general upper bound for the density of the gaps of a numerical semigroup in the integer interval bounded by $0$ and the Frobenius number? This can be formulated as follows \cite{fgh}:

\begin{conj}[Wilf conjecture]\label{Wilfconjecture}
For any numerical semigroup $\Gamma$, its conductor, embedding dimension, number of gaps and delta-invariant are related by means of the inequalities
\begin{equation}\label{ineqn:wilf}
\frac{|\mathbb{N}\setminus\Gamma|}{c(\Gamma)}\leq 1-\frac{1}{e(\Gamma)},\quad\text{or equivalently}\quad	c(\Gamma)\leq e(\Gamma)\cdot \delta(\Gamma).
\end{equation}
\end{conj}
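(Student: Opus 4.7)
The Wilf conjecture is a famous open problem, so rather than claim a complete proof my plan is to sketch the most natural direct attempt and to identify where it runs aground. First, I would pass to the additive reformulation. The interval $\{0, 1, \ldots, F(\Gamma)\}$ has $c(\Gamma)$ elements, partitioned into the gaps and the elements of $\Gamma$ strictly below $F(\Gamma)$, so $c(\Gamma) = |\mathbb{N}\setminus \Gamma| + \delta(\Gamma)$. Wilf's inequality is therefore equivalent to $|\mathbb{N}\setminus \Gamma| \leq (e(\Gamma) - 1)\,\delta(\Gamma)$, which casts it as a comparison between the gaps and the small semigroup elements.

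The main tool would be the Apéry set $\operatorname{Ap}(\Gamma, m) = \{w_0 = 0, w_1, \ldots, w_{m-1}\}$ relative to the multiplicity $m = a_1$, where $w_i$ is the smallest element of $\Gamma$ congruent to $i$ modulo $m$. Every semigroup element factors uniquely as $w_i + km$ with $k \geq 0$, giving the closed formula $|\mathbb{N}\setminus\Gamma| = \sum_{i=1}^{m-1}\lfloor w_i/m \rfloor$. The minimal generators $a_1 = m, a_2, \ldots, a_{e(\Gamma)}$ all lie in $\{0\} \cup \operatorname{Ap}(\Gamma, m)$, which is the only structural tether connecting $e(\Gamma)$ to the remaining invariants. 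The plan would then be to exploit translation by minimal generators: to each gap $x < F(\Gamma)$ one associates the set of sums $\{x + a_j : a_j \text{ a minimal generator}\}$ that lie in $\Gamma \cap [0, F(\Gamma)]$, and one tries to show by a weighted double counting that these images produce, on average, at least $e(\Gamma) - 1$ distinct semigroup elements below $F(\Gamma)$ per gap.

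The hard part, and the reason the conjecture has resisted proof for more than four decades, is precisely this averaging step. Translation of a gap $x$ by a minimal generator $a_j$ may overshoot $F(\Gamma)$, and the distribution of Apéry elements across residue classes modulo $m$ can be extremely uneven, so that the multi-image map is neither injective nor uniformly surjective. All known cases evade the obstruction by restricting one regime at a time: $e(\Gamma) \leq 3$ via a Sylvester--Frobenius type argument, small multiplicity $m \leq 18$ via computer-assisted enumeration, or structural hypotheses such as Arf or maximal embedding dimension. No purely combinatorial argument simultaneously handles large $m$ and large $e(\Gamma)$, and this is precisely what motivates the paper's alternative route of repackaging the inequality through a Wilf function on semimodules, where the extra flexibility may permit an inductive or averaged argument that the direct Apéry analysis does not support.
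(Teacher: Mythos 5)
You have correctly recognized that this statement is Conjecture~\ref{Wilfconjecture}, an open problem, and the paper itself contains no proof of it; there is therefore no ``paper's own proof'' to compare against, and your submission is rightly a situation report rather than a proof. Your reductions are accurate: the identity $c(\Gamma)=|\mathbb{N}\setminus\Gamma|+\delta(\Gamma)$ does make the two displayed forms of \eqref{ineqn:wilf} equivalent and recasts the conjecture as $|\mathbb{N}\setminus\Gamma|\leq (e(\Gamma)-1)\delta(\Gamma)$, the genus formula $|\mathbb{N}\setminus\Gamma|=\sum_{i=1}^{m-1}\lfloor w_i/m\rfloor$ is the standard Selmer-type consequence of the Ap\'ery decomposition and is exactly the computation the paper reuses inside the proof of Theorem~\ref{thm:m}, and the inclusion of the minimal generators in $\{0\}\cup\mathrm{Ap}(\Gamma,m)$ is the paper's Lemma~\ref{lem:apminimalgens}. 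Your diagnosis of where the averaging argument fails (overshooting $F(\Gamma)$, uneven distribution of Ap\'ery elements across residue classes) is also a fair account of the known obstructions, and the list of settled cases you cite is essentially the one the paper references.

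The only thing worth flagging is that the paper's actual contribution on this point is not the direct Ap\'ery route you sketch but a conditional reduction: Theorem~\ref{thm:wilfitowins} shows that the Bound conjecture~\ref{conj:bound} on the Wilf number $W(g)$ of a gap implies the Wilf conjecture, using $\max(W(g))\leq W_{\Gamma}(4)\leq W_{\Gamma}(e(\Gamma))$ from Theorem~\ref{thm:boundwilfgap} together with monotonicity of $W_{\Gamma}$. If your intent was to engage with the paper's strategy rather than the classical one, that is the mechanism you should have described; as written, your last sentence gestures at it but does not state the reduction. This is a difference of emphasis, not an error.
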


Many particular cases of this conjecture are known, see e.g. Delgado, \cite{delgado1}, Dobbs and Matthews \cite{dm}, Eliahou \cite{eliahou}, Kaplan \cite{kaplan}, Sammartano \cite{samartano}, Bruns, Garc\'ia-S\'anchez, O'Neil and Wilburne \cite{bruns}, among others. For a recent account of the conjecture we refer the reader to the survey \cite{mdelgado}.
\medskip

A. Nijenhuis \cite{ninjen} proposed an alternative method that improves the time complexity of Wilf's algorithm. More recently, S. B\"{o}cker and Z. Lipt\'{a}k \cite{proteinas} improved the time complexity of Nijenhuis' algorithm to \(O(e(\Gamma)a_1)\) with interesting applications to interpreting mass spectrometry peaks. Based on those improvements as well as on the huge number of examples provided by many of the above mentioned references \cite{dgr,delgado1,eliahou,kaplan,dm}, one realizes that the inequality in the Wilf conjecture \ref{Wilfconjecture} seems to be far from being tight. Having this in mind, the underlying meaning of the Wilf conjecture reveals to be the finding of an upper bound for the quotient \(c(\Gamma)/\delta(\Gamma)\) and, after that, the comparison of this bound with the embedding dimension of the semigroup.
 \medskip
 
This idea leads us to not directly consider the Wilf conjecture but a general inequality of the type \(k\delta(\Gamma)>c(\Gamma)\) with \(k\in\mathbb{Z}\); we call this inequality a \emph{Wilf-type inequality}. In this general setting, it makes sense to define the \emph{Wilf function} associated to a numerical semigroup as the map $W_{\Gamma}:\mathbb{N}\to \mathbb{Z}$ given by
$$
k\mapsto k\delta(\Gamma)-c(\Gamma).
$$

For $k=e(\Gamma)$, the value $W_{\Gamma}(e(\Gamma))$ is called the \emph{Wilf number} of $\Gamma$, see \cite{eliahou}, \cite{almiyano-tetris}; also \cite[p.~45]{mdelgado}. Of course, Conjecture \ref{Wilfconjecture} may be trivially rewritten in terms of the Wilf number as $W_{\Gamma}(e(\Gamma))\geq 0$. Moreover, by the previous discussion, it becomes natural to try to find the best upper bound, say \(\mu_{\Gamma}\) for the quotient \(c(\Gamma)/\delta(\Gamma)\) (see end of Section \ref{sec:wilffunction}).
\medskip

On the other hand, inspired by Fr\"oberg, Gottlieb and H\"aggkvist \cite{fgh}, Moscariello and Sammartano \cite{mossam} asked for the equality in (\ref{ineqn:wilf}), and they proposed the following conjecture:

\begin{conj}[Fr\"ogoh\"amosa-conjecture]\label{alesios}
Let $\Gamma\neq \mathbb{N}$ be a numerical semigroup.
The equality $c(\Gamma)= e(\Gamma)\cdot \delta(\Gamma)$ holds if and only if $\Gamma$ has embedding dimension $2$ or there exist $m,q\in \mathbb{N}\setminus \{0\}$ with $m>1$ such that
$$
\Gamma=W_{m,q}:=\{0, m, 2m, 3m, \ldots, (q-1)m, qm, qm+1, qm+2, \ldots\}.
$$
\end{conj}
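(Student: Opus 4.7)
The plan is to handle the two directions separately; the ``if'' direction is a direct computation, and the ``only if'' direction is the substantial part and proceeds by a structural analysis of the Apéry set of $\Gamma$.

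For the forward direction, if $e(\Gamma)=2$, say $\Gamma=\langle a,b\rangle$ with $\gcd(a,b)=1$, the Sylvester--Frobenius formulae give $c(\Gamma)=(a-1)(b-1)$ and $g(\Gamma)=(a-1)(b-1)/2$, whence $\delta(\Gamma)=c-g=(a-1)(b-1)/2$ and $e(\Gamma)\delta(\Gamma)=c(\Gamma)$. For $\Gamma=W_{m,q}$ with $m>1$, the Apéry set with respect to $m$ is $\{0,qm+1,\ldots,qm+m-1\}$, so the minimal generators are $m,qm+1,\ldots,qm+m-1$; this yields $e(\Gamma)=m$, $c(\Gamma)=qm$, $g(\Gamma)=q(m-1)$ and $\delta(\Gamma)=q$, hence again $e(\Gamma)\delta(\Gamma)=c(\Gamma)$.

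For the reverse direction, assume $c(\Gamma)=e(\Gamma)\delta(\Gamma)$. Using $\delta=c-g$, I rewrite this as $c(e-1)=eg$; since $\gcd(e-1,e)=1$, one gets $(e-1)\mid g$ and thus $g=(e-1)k$, $c=ek$, $\delta=k$ for some integer $k\ge 1$. Let $m$ be the multiplicity and $\mathrm{Ap}(\Gamma,m)=\{0,w_1,\ldots,w_{m-1}\}$ the Apéry set. I recall the standard facts $e\le m$, $w_i\ge m+i$ for $i\ne 0$, $F(\Gamma)=\max_i w_i-m$, and Selmer's identity $\sum_i w_i=mg+\binom{m}{2}$. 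Assuming $e\ge 3$ (the case $e=2$ being part of the conclusion), the key reduction is to show that every non-zero $w_i$ satisfies $w_i\ge c$. Granted this, each such $w_i$ must lie in the layer $[c,c+m-1]$, which contains exactly one representative of each residue class modulo $m$; thus $\{w_1,\ldots,w_{m-1}\}$ is the set of non-multiples of $m$ in that layer, and plugging this into Selmer's identity forces $m\mid c$ and $m=e$. Unravelling the Apéry set then yields $\Gamma=W_{m,q}$ with $q=c/m$.

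The hard step is the claim that $e\ge 3$ together with $c=e\delta$ forces every non-zero Apéry element to be $\ge c$. A direct attempt using only Selmer's identity, the bound $w_i\ge m+i$, and the partition
$$\delta=\bigl(\lfloor(c-1)/m\rfloor+1\bigr)+\sum_{i\ne 0,\,w_i<c}\bigl(\lfloor(c-1-w_i)/m\rfloor+1\bigr)$$
produces inequalities that are not tight enough to rule out a single small $w_i$, and the $e=2$ counterexamples (such as $\langle 3,5\rangle$, where $5$ and $6$ sit below $c=8$) show that some genuinely new input is required. I expect that the decisive ingredient must come from the Wilf function $W_\Gamma$ introduced in Section~\ref{sec:wilffunction} and its semimodule companions: each Apéry element lying below $c$ should give rise to a proper $\Gamma$-semimodule whose Wilf function can be compared against $W_\Gamma(e)=0$ in order to contradict $e\ge 3$. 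It is precisely at this point that the semimodule-theoretic extension of Wilf's conjecture, which motivates the rest of the paper, is expected to do the decisive work.
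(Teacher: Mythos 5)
First, note that the statement you are trying to prove is not proved in the paper at all: Conjecture \ref{alesios} is stated as an open problem (due to Moscariello and Sammartano \cite{mossam}), and the paper explicitly remarks that the ``only if'' direction is the difficult part, for which ``there is no hint'' of a proof in full generality. So there is no proof in the paper to compare yours against; the only question is whether your argument would constitute a new proof, and it does not. Your ``if'' direction is correct and is exactly the easy verification the paper alludes to. In the ``only if'' direction, the reduction $c(e-1)=eg$, hence $(e-1)\mid g$, and the endgame (all nonzero Ap\'ery elements lying in $[c,c+m-1]$, Selmer's identity forcing $m\mid c$ and $e=m$, hence $\Gamma=W_{m,q}$) are sound as far as they go, but the entire content of the conjecture is concentrated in the step you flag yourself: that $e\geq 3$ together with $c=e\delta$ forces $w_i\geq c$ for every $i\neq 0$. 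That claim says precisely that every element of $\Gamma$ below the conductor is a multiple of $m$, i.e.\ that $\Gamma=W_{m,q}$ already; it is therefore not a lemma on the way to the conjecture but essentially a restatement of it, and you offer no argument for it beyond the hope that the Wilf-function machinery of the paper will supply one. It does not: the paper's semimodule results (e.g.\ the Bound conjecture) are themselves conjectural and are aimed at Conjecture \ref{Wilfconjecture}, not at the equality case.

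The closest the paper comes to your missing step is Theorem \ref{thm:m}, which characterizes $W_{\Gamma}(m)=0$ (note: $m$, not $e$) as equivalent to $\Gamma=\langle m,qm+1,\ldots,qm+(m-1)\rangle$; its proof runs along the same lines as your endgame (all Ap\'ery elements are shown to share the integral part $\lfloor w_j/m\rfloor=\lfloor w_{m-1}/m\rfloor$, hence lie in the top layer, hence $e=m$). This settles Conjecture \ref{alesios} only under the additional hypothesis of maximal embedding dimension $e(\Gamma)=m(\Gamma)$, as the paper itself points out; passing from $W_{\Gamma}(e)=0$ to $W_{\Gamma}(m)=0$ when $e<m$ is exactly the open gap. (A small factual slip: for $\langle 3,5\rangle$ only $5$ is an Ap\'ery element below $c=8$; the element $6$ satisfies $6-3\in\Gamma$ and so is not in $\mathrm{Ap}(\Gamma,3)$. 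This does not affect your point that the $e=2$ case blocks any purely soft counting argument.)
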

Observe that in the case \(\Gamma=\mathbb{N}\) the equality holds trivially. Also, for \(\Gamma\neq \mathbb{N}\), it is easy to check that the semigroups of the form \(\Gamma=\langle a,b \rangle\) and \(\Gamma=W_{m,q}\) one can easily check that they satisfy the equality  $c(\Gamma)= e(\Gamma)\cdot \delta(\Gamma)$; therefore the difficult part is the converse statement. Although many numerical experiments have been done, there is no hint to prove Conjecture \ref{alesios} in its whole generality. Moreover, there is no philosophical reason explaining why the semigroups occurring in Conjecture \ref{alesios} are exactly those. One of the contributions in this paper is then to propose an explanation for the reason why the semigroups involved in the Fr\"ogoh\"amosa-conjecture are precisely the ones appearing and no others.
\medskip

Thanks to the introduction of the Wilf function we are able to prove the following:
\begin{theorem*}[Theorem \ref{thm:extreme}]
	Let \(\Gamma\) be a numerical semigroup. Then,
	\begin{enumerate}
		\item \(\Gamma=\mathbb{N}\) if and only if \(W_\Gamma(k)\geq 0\) for \(1\leq k\leq m\).
		\item \(\Gamma=W_{m,q}\) for \(q\geq 1\) if and only if \(W_{\Gamma}(k)\leq 0\) for all \(1\leq k\leq m\).
		\item \(\Gamma=\langle a,b \rangle\) with \(\gcd(a,b)=1\) is the numerical semigroup with minimal embedding dimension between those satisfying \(W_{\Gamma}(k)\geq 0\) for all \(2\leq k\leq m\) and \(W_{\Gamma}(2)=0\).
	\end{enumerate}
\end{theorem*}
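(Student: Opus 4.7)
The plan is to exploit the formula $W_\Gamma(k) = k\delta(\Gamma) - c(\Gamma)$ together with the structural bound $\delta(\Gamma) \geq \lceil c(\Gamma)/m\rceil$, which follows from the fact that every non-negative multiple of the multiplicity $m$ lies in $\Gamma$. For part (1), the forward direction is immediate: when $\Gamma = \mathbb{N}$ one has $m = 1$ and $c = \delta = 0$, so $W_\mathbb{N}(1) = 0$. For the converse, I would note that $\Gamma \neq \mathbb{N}$ forces $c \geq 2$ and $\delta \leq F = c - 1$, since there are only $F$ non-negative integers strictly less than $F$; hence $W_\Gamma(1) = \delta - c \leq -1 < 0$, already contradicting the hypothesis at $k = 1$.

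For part (2), the central input is the always-valid inequality $W_\Gamma(m) \geq 0$, which is obtained from $\delta \geq \lceil c/m\rceil \geq c/m$. Under the hypothesis $W_\Gamma(k) \leq 0$ for all $1 \leq k \leq m$, the case $k = m$ combined with this inequality forces the equality $W_\Gamma(m) = 0$, i.e.\ $c = m\delta$. I would then examine the equality case of $\delta \geq \lceil c/m\rceil$: it forces $m \mid c$ and shows that the only elements of $\Gamma$ strictly below $c$ are $0, m, 2m, \ldots, (q-1)m$, where $q := c/m$. Since every non-zero Apéry element $w_i \in \operatorname{Ap}(\Gamma, m)$ satisfies $w_i \leq F + m = c + m - 1$ and $w_i \equiv i \pmod{m}$, one is forced to have $w_i = c + i$, from which $\Gamma = W_{m,q}$. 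The reverse implication is a direct computation: for $\Gamma = W_{m,q}$ one has $\delta = q$ and $c = qm$, so $W_\Gamma(k) = q(k - m) \leq 0$ precisely when $k \leq m$.

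For part (3), I will substitute the classical Sylvester-Frobenius formulas $c(\langle a, b\rangle) = (a-1)(b-1)$ and $\delta(\langle a, b\rangle) = (a-1)(b-1)/2$ (valid since $\gcd(a,b) = 1$) into the Wilf function, obtaining $W_\Gamma(k) = (a-1)(b-1)(k-2)/2$, which vanishes at $k = 2$ and is non-negative for every $k \geq 2$. Since every numerical semigroup with two minimal generators is of this form and the only semigroup of embedding dimension one is $\mathbb{N}$ itself (excluded here by the non-triviality $m \geq 2$), the family $\langle a, b\rangle$ realises the smallest embedding dimension among semigroups satisfying both conditions. The main subtlety in the whole theorem lies in the equality-case analysis of part (2), where one must extract simultaneously the divisibility $m \mid c$ and the explicit shape of the Apéry set in order to conclude $\Gamma = W_{m,q}$; the remaining parts reduce to an elementary counting inequality combined with the classical formulas for two-generated semigroups.
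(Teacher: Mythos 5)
Your proof is correct, and in part (2) it takes a genuinely different and more economical route than the paper. The paper obtains (2) by invoking Theorem \ref{thm:m}, whose proof of the equality case is a long computation with the Ap\'ery set: it expresses $\delta$ via the $\eta_j$ and the floors $\lfloor w_j/m\rfloor$, forces $\rho=m$ and $A=1$, shows all Ap\'ery elements share the same integral part upon division by $m$, and then rules out $e<m$ by an explicit argument on representations of $w_e$. You instead prove $W_\Gamma(m)\geq 0$ in one line from $\delta\geq\lceil c/m\rceil$ (counting multiples of $m$ below $c$), and extract the equality case directly: $c=m\delta$ forces $m\mid c$ and $\delta=\lceil c/m\rceil$, so $\Gamma\cap[0,c)$ consists exactly of the multiples of $m$, whence $\Gamma=\{0,m,\dots,(q-1)m\}\cup[qm,\infty)=W_{m,q}$ (your detour through $w_i=c+i$ is not even needed). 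This buys a much shorter and more transparent argument; what it does not reproduce is the finer information the paper's computation yields along the way (e.g.\ the Remark on the invariant $B$ and the structure of the minimal generators). In part (3) you use the Sylvester formulas $c=(a-1)(b-1)$, $\delta=(a-1)(b-1)/2$ where the paper routes through Proposition \ref{prop:2} (the conditions characterize symmetric semigroups, and the symmetric ones of minimal embedding dimension are the two-generated ones); both are complete, though your version verifies only that the two-generated semigroups satisfy the conditions and have the least possible embedding dimension $2$, which, since \emph{every} semigroup of embedding dimension $2$ is of the form $\langle a,b\rangle$ with $\gcd(a,b)=1$, suffices. Part (1) matches what the paper leaves as ``clear''. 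One shared loose end, not to be held against you: as stated, $\Gamma=\mathbb{N}$ also satisfies $W_\Gamma(k)\leq 0$ for all $k$ (all values vanish) without being of the form $W_{m,q}$ with $m>1$, so (2) implicitly assumes $\Gamma\neq\mathbb{N}$.
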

In fact, it was known that, under the assumption of maximal embedding dimension for a semigroup, then \((2)\) is true. This is not directly related to the statement \((2)\) of Theorem \ref{thm:extreme}, since we make no assumptions about the embedding dimension of the semigroup; this means that the condition \(W_{\Gamma}(k)\leq 0\) for all \(2\leq k\leq m\) implies maximal embedding dimension. In this way, Theorem \ref{thm:extreme} allows us to say that the semigroups appearing in the Fr\"ogoh\"amosa-conjecture are exactly those for which the Wilf function \(W_\Gamma(k)\) has an extreme behaviour, where \emph{extreme} is precisely defined by the conditions either \(W_{\Gamma}(k)\leq 0\) for all \(2\leq k\leq m\) or minimal embedding dimension among those satisfying \(W_{\Gamma}(k)\geq 0\) for all \(2\leq k\leq m\) and \(W_{\Gamma}(2)=0\). This constitutes a complete new point of view and we hope that this argument will shed some light on the attempts to solve both the Wilf conjecture and the Fr\"ogoh\"amosa-conjecture. An instance of this utility is shown by the authors in \cite{eliwilf}.
\medskip

Pieces of information about $\Gamma$ are also encoded in its semimodules. A $\Gamma$-semimodule $\Delta$ is a non-empty subset of $\mathbb{Z}$ that is bounded below and satisfies $\Delta+\Gamma\subseteq \Delta$. A $\Gamma$-semimodule is finitely generated, has a unique system of minimal generators and therefore it possesses invariants such as conductor, Frobenius number, embedding dimension, or delta-invariant. Hence it is possible to consider the Wilf function associated to a $\Gamma$-semimodule $\Delta$, say $W_{\Delta}(k)$. For $k=e(\Delta)$, the number $W_{\Delta}(e(\Delta))$ has been already considered in \cite{almiyano-tetris}. In the current paper, we study some properties of the Wilf function and we use this device to conjecture a bound for the Wilf number of $\Gamma$ in the following manner:

\begin{conj}[Bound conjecture]\label{conj:bound}
There exists a semimodule $\Delta$ minimally generated by $[0,g]$ for a gap $g$ of $\Gamma$, such that $W_{\Delta}(2)\geq -W_{\Gamma}(e(\Gamma))$.
\end{conj}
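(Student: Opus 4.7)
Unpacking the statement, the task is to exhibit, for each numerical semigroup $\Gamma$, a gap $g$ such that the semimodule $\Delta = \Gamma \cup (g+\Gamma)$ satisfies
\begin{equation*}
2\delta(\Delta) - c(\Delta) \;\geq\; c(\Gamma) - e(\Gamma)\delta(\Gamma).
\end{equation*}
I would begin by making the structure of $\Delta$ completely explicit. Setting $G_g := \{L \in \mathbb{N}\setminus\Gamma : L - g \in \Gamma\}$, one has $\Delta = \Gamma \cup G_g$ as a disjoint union, so the gaps of $\Delta$ are exactly $(\mathbb{N}\setminus\Gamma)\setminus G_g$ and $F(\Delta) = \max\bigl((\mathbb{N}\setminus\Gamma)\setminus G_g\bigr)$. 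This reduces the desired inequality to a purely combinatorial statement about how the translate $g+\Gamma$ covers the gaps of $\Gamma$, and both $\delta(\Delta)$ and $c(\Delta)$ become explicit functions of $g$ once the gap set and the Apéry set of $\Gamma$ are fixed.

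Next I would isolate canonical candidates for $g$. Two complementary families appear natural: (i) the pseudo-Frobenius numbers of $\Gamma$, for which $G_g = \{g\}$ and the passage from $\Gamma$ to $\Delta$ amounts to removing a single gap, producing closed expressions for $c(\Delta)$ and $\delta(\Delta)$; and (ii) the ``small'' gaps $g$ maximising $|G_g|$, which produce the largest possible $\Delta$. Since it is unlikely that a single choice dominates uniformly, I would then run an averaging argument: sum $W_\Delta(2)$ over all gaps $g$ of $\Gamma$, double-count the pairs $(g,L)$ with $g, L \in \mathbb{N}\setminus\Gamma$ and $L - g \in \Gamma$ using Apéry-set data, and attempt to prove that the mean value is at least $-W_\Gamma(e(\Gamma))$, so that by pigeonhole at least one gap witnesses the conjecture. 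In parallel, I would calibrate the conjecture on the extremal families of Theorem~\ref{thm:extreme}---the two-generator semigroups $\langle a,b\rangle$ and the family $W_{m,q}$---where every invariant admits a closed form, both to check sharpness and to guess the correct universal witness.

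The main obstacle is intrinsic: if combined with even the mild upper estimate $W_\Delta(2) \leq 0$ for the selected $g$, the Bound conjecture entails the Wilf conjecture itself, so any full proof must control two competing quantities simultaneously---keeping $2\delta(\Delta)-c(\Delta)$ large while forcing the choice of $g$ to respect the target bound $c(\Gamma) - e(\Gamma)\delta(\Gamma)$. I expect the decisive ingredient to be a structural statement linking the syzygies of the two-generated semimodule $[0,g]\Gamma$ to the embedding dimension $e(\Gamma)$, in the spirit of the Wilf function machinery developed in the earlier sections; without such a bridge, producing the gap $g$ by a purely combinatorial or ad hoc construction seems out of reach with the methods currently available for $\Gamma$-semimodules.
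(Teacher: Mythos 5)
The statement you were asked to prove is labelled a \emph{conjecture} in the paper, and the paper offers no proof of it: the authors only establish (Theorem \ref{thm:wilfitowins}) that it \emph{implies} the Wilf conjecture, together with the a priori bounds $\max(W(g))\leq W_{\Gamma}(4)$ and $\max(W(g))-\min(W(g))<2\delta(\Gamma)$ of Theorem \ref{thm:boundwilfgap} and Proposition \ref{prop:rangewilfito}. So there is no reference argument to compare yours against, and any complete proof would in particular settle Wilf's conjecture --- which is a strong sign that no short argument of the kind you sketch can succeed.

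As for your proposal itself: it is a research plan, not a proof, and no step of it is actually carried out. Your structural reduction is correct ($\Delta=\Gamma\cup(g+\Gamma)$, gaps of $\Delta$ equal to $(\mathbb{N}\setminus\Gamma)\setminus G_g$), and your two candidate families (pseudo-Frobenius numbers and gaps maximising $|G_g|$) are sensible, but you never compute $W_{\Delta}(2)$ for either of them, never set up or evaluate the proposed averaging sum over pairs $(g,L)$, and never verify the inequality even on the calibration families $\langle a,b\rangle$ and $W_{m,q}$. The decisive ingredient you yourself identify --- a bridge from the syzygies of $\Delta_{[0,g]}$ to $e(\Gamma)$ --- is precisely what is missing, and you give no indication of what it should be. Note also a subtlety you gloss over: the averaging-plus-pigeonhole route would at best produce one gap with $W(g)$ at least the mean, but the mean of $W(g)$ over all gaps is not obviously bounded below by $-W_{\Gamma}(e(\Gamma))$; indeed the paper exhibits semigroups (Example \ref{ex:fromentineliahou}, via $\mu_{\Gamma}>4$) for which $W(g)<0$ for \emph{every} gap, so the mean itself is negative there and the comparison with $-W_{\Gamma}(e(\Gamma))$ would still require exactly the quantitative control you do not supply. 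In short: the statement remains open in the paper, and your attempt contains no completed argument that would change that.
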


This viewpoint might bring some knowledge in order to solve the fascinating and involved Wilf conjecture, since we are also able to prove

\begin{theorem}
Bound conjecture \ $\implies$ \ Wilf conjecture.
\end{theorem}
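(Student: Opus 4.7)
The plan is to rearrange the Bound conjecture as a lower bound on $W_{\Gamma}(e(\Gamma))$ and then to chain it to Wilf via a companion upper bound on $W_{\Delta}(2)$. Concretely, the Bound conjecture supplies a $\Gamma$-semimodule $\Delta$ minimally generated by $\{0,g\}$ with $g\in \mathbb{N}\setminus\Gamma$ and $W_{\Delta}(2)\ge -W_{\Gamma}(e(\Gamma))$; rewritten, this is $W_{\Gamma}(e(\Gamma))\ge -W_{\Delta}(2)$. If one can establish the companion inequality $W_{\Delta}(2)\le 0$, i.e.\ $c(\Delta)\ge 2\delta(\Delta)$, then $W_{\Gamma}(e(\Gamma))\ge -W_{\Delta}(2)\ge 0$, which is exactly Wilf's conjecture.

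For the companion bound, I would write $\Delta=\Gamma\cup(g+\Gamma)$ and put $F:=F(\Delta)$. Deploying a pair of Apéry-style reflections on $[0,F]$ is the natural approach. The first, $\sigma(x)=F-x$, sends $\Gamma\cap[0,F]$ into the gap set of $\Delta$: if simultaneously $x\in\Gamma$ and $F-x\in\Delta$, the semimodule relation $\Gamma+\Delta\subseteq\Delta$ forces $F=x+(F-x)\in\Delta$, contradicting $F\in\mathbb{N}\setminus\Delta$. Hence $\sigma$ injects $\Gamma\cap[0,F]$ into $\mathrm{Gap}(\Delta)$. A translated reflection $\tau(y)=F-(y-g)$ applied to $y=g+s\in (g+\Gamma)\cap[g,F]$ (so $s\in\Gamma$) injects $(g+\Gamma)\cap[g,F]$ into $\mathrm{Gap}(\Delta)$ by the same mechanism: otherwise $F=\tau(y)+s\in\Delta+\Gamma\subseteq\Delta$. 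The two domains together exhaust $\Delta\cap[0,F]$, of cardinality $\delta(\Delta)$.

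The final and decisive step is to combine $\sigma$ and $\tau$ into a single injection $\Delta\cap[0,F]\hookrightarrow\mathrm{Gap}(\Delta)$, which would yield $\delta(\Delta)\le g(\Delta)=c(\Delta)-\delta(\Delta)$ and hence the companion bound. This amalgamation is where I expect the main obstacle to lie: the images of $\sigma$ and $\tau$ can in principle collide on the same gap, and the domain overlap $\Gamma\cap(g+\Gamma)$ must be accounted for. The hypothesis that $\{0,g\}$ is a \emph{minimal} generating set (in particular $g\notin\Gamma$) should be essential, most plausibly through a case split around the shift $g$: routing $\sigma$ to cover the gaps of $\Delta$ in $[0,g-1]$ (where $\Delta$ coincides with $\Gamma$) and $\tau$ to cover those in $[g,F]$ (where the translated copy enters), into complementary halves of $\mathrm{Gap}(\Delta)$. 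Executing this combinatorial partition cleanly---and, if necessary, identifying the canonical subclass of $(g,\Delta)$ for which the companion bound is tight---is the technical heart of the implication.
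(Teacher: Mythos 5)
Your reduction hinges on the ``companion bound'' $W_{\Delta}(2)\le 0$, i.e.\ $c(\Delta)\ge 2\delta(\Delta)$ for $\Delta=\Delta_{[0,g]}$, and this is where the argument breaks: that inequality is false in general. Table \ref{tab:table2} of the paper already records counterexamples --- for $\Gamma=\langle 6,8,35\rangle$ the gaps $g=10,25,31,33,37,39,45$ all have $W_{\Delta_I}(2)>0$ --- and Proposition \ref{prop:wilfgap-tetris} together with Lemma \ref{lem:sym} shows that even for $\Gamma=\langle\alpha,\beta\rangle$ the quantity $W(g)=W_{\Delta_{[0,g]}}(2)$ takes both signs. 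Your reflection scheme cannot be amalgamated, for a structural reason: writing $\Delta=\Gamma\cup(g+\Gamma)$, the image of $\tau$ is $\{F-s:\ s\in\Gamma,\ 0\le s\le F-g\}$, which is \emph{contained} in the image $\{F-x:\ x\in\Gamma,\ 0\le x\le F\}$ of $\sigma$; so the two reflections jointly cover no more gaps than $\sigma$ alone, and they only bound $|\Gamma\cap[0,F]|$, not $\delta(\Delta)$. Since $\delta(\Delta)>c(\Delta)-\delta(\Delta)$ actually occurs, no injection $\Delta\cap[0,F]\hookrightarrow\mathbb{N}\setminus\Delta$ can exist. Consequently the single inequality $W_{\Gamma}(e(\Gamma))\ge -W_{\Delta}(2)$ coming from the Bound conjecture does not by itself yield nonnegativity.

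The missing ingredient is an \emph{upper} bound on $W(g)$ in terms of the Wilf function of $\Gamma$ itself, not a sign condition on $W(g)$. This is Theorem \ref{thm:boundwilfgap}: $\max(W(g))\le W_{\Gamma}(4)$, obtained from the comparison $\delta(\Delta_I)\le 2\delta(\Gamma)-(c(\Gamma)-c(\Delta_I))$ together with $c(\Delta_I)\le c(\Gamma)$. With it, and using that $W_{\Gamma}(k)$ is increasing in $k$, one gets for $e(\Gamma)\ge 4$ the sandwich
$$
W_{\Gamma}(e(\Gamma))\ \ge\ W_{\Gamma}(4)\ \ge\ \max(W(g))\ \ge\ \min(W(g))\ \ge\ -W_{\Gamma}(e(\Gamma)),
$$
whence $2W_{\Gamma}(e(\Gamma))\ge 0$; the remaining case $e(\Gamma)<4$ is covered by the known validity of Wilf's conjecture for embedding dimension at most three. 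If you wish to keep a sign-based case split, note that when some gap has $W(g)\ge 0$ the conclusion follows from Theorem \ref{thm:boundwilfgap} alone (then $W_{\Gamma}(4)\ge 0$), and only when all $W(g)$ are negative does the Bound conjecture enter; in neither branch is the false inequality $W_{\Delta}(2)\le 0$ available or needed.
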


It is certainly remarkable that the Wilf function associated to a $\Gamma$-semimodule is related to the Wilf conjecture. We hope that this new viewpoint may be helpful for the understanding of the Wilf conjecture.
\medskip

On the other hand, it is reasonable to ask for a possible extension of Wilf conjecture in the case of a \(\Gamma\)--semimodule \(\Delta\). In this case, the definition of the Wilf function for a semimodule is key in order to provide a good generalization of Wilf conjecture for semimodules over a numerical semigroup. As our previous work shows \cite{almiyano-tetris} (see also Section \ref{subsec:wilffunction}), the natural generalization, i.e. \(W_\Delta(e(\Delta))\geq 0,\) does not work. In this direction, it is a challenging question to find the minimal \(k\) such that \(W_\Delta(k)\geq 0.\) With the help of our previous work \cite{almiyano-tetris}, we will show that in the particular case of \(\Gamma=\langle\alpha,\beta\rangle\) and \(\Delta\) minimally generated by \(\{0,g\},\) with \(g\in\mathbb{N}\setminus\Gamma,\) we have \(W_\Delta(3)\geq 0\) (see Theorem 4.15). However, this seems to be a very particular case. Thus, in Question \ref{extensionwilfsemimodules} we propose the main problems that can be considered as the generalization of Wilf conjecture to semimodules over a numerical semigroup.

\section{Fundamental facts}\label{sec:basics}

Let $\Gamma$ be a numerical semigroup generated by $a_1,\ldots , a_e$; this fact will be expressed by writing $\Gamma=\langle a_1,\ldots , a_e \rangle$. We will assume that $a_1,\ldots , a_e$ are a \emph{minimal system of generators}. We call $m(\Gamma):=\min (\Gamma\setminus\{0\})$ the \emph{multiplicity} of the semigroup; it is a trivial observation that $m(\Gamma)=a_1$. For generalities on numerical semigroups the reader is referred to the book of Rosales and Garc\'ia-S\'anchez \cite{RosalesGarciaSanchez}. 
\medskip

Over a numerical semigroup $\Gamma$ it is possible to define a module structure in analogy to ring theory: A non-empty subset $\Delta \subseteq \mathbb{Z}$ is said to be a $\Gamma$-semimodule if $\Delta+\Gamma \subseteq \Delta$, where the set $\Delta + \Gamma$ is understood as all possible sums $a+\gamma$ with $a\in \Delta$, $\gamma\in \Gamma$. The set of all $\Gamma$-semimodules with respect to this addition has a structure of additive binoid, with $\Gamma$ as a neutral element, and $\mathbb{N}$ as an absorbent element; notice that, in particular, $\Gamma$ itself is a $\Gamma$-semimodule.
\medskip

A system of generators of $\Delta$ is a subset $\mathcal{E} \subseteq \Delta$ with
$$
\bigcup_{x\in\mathcal{E}} (x+\Gamma) =\Delta.
$$
It is called minimal if no proper subset of $\mathcal{E}$ generates $\Delta$. Every $\Gamma$-semimodule $\Delta$ is finitely generated, and possesses a \emph{minimal} system of generators. Minimal systems of generators of $\Gamma$-semimodules containing $0$ are well-understood: they are of the form $I=[g_0=0,g_1, \ldots, g_r]$ where $|g_i-g_j|$ is a gap of $\Gamma$ for every $i,j\in \{0,\ldots r \} $ with $i\neq j$, see \cite{mu}. Notice that in particular $g_i$ is a gap of $\Gamma$ for every $i\in\{1,\ldots ,r\}$, and the \emph{embedding dimension} $r=:e(\Delta)$ of $\Delta$ is bounded by $0\leq r\leq m(\Gamma)-1$, cf. \cite{mu}. For $\Delta=\Gamma$, the embedding dimension $e(\Gamma)$ of $\Gamma$ is bounded below by $2$ and above by the multiplicity $m(\Gamma)$ of  the semigroup, cf. \cite[Proposition 2.10]{RosalesGarciaSanchez}. If $e(\Gamma)=2$ resp. $e(\Gamma)=m(\Gamma)$ we say that $\Gamma$ has minimal resp.~maximal embedding dimension.
\medskip

The elements in the set $\mathbb{N}\setminus \Delta$, which is finite, are called gaps of $\Delta$. The cardinality $g(\Delta)$ of the set of gaps of $\Delta$ is called the genus of $\Delta$. The maximal gap with respect to the usual total ordering in $\mathbb{Z}$ is called the \emph{Frobenius number} of $\Delta$, written $F(\Delta)$. The number $c(\Delta):=F(\Delta)+1$ is called the conductor of $\Delta$. Moreover, the delta-invariant of $\Gamma$ is defined to be
$$
\delta (\Delta):= |\{x\in \Delta : x < c(\Delta)\}|.
$$
In all these invariants, the dependency of $\Delta$ will be dropped out from the notation whenever no risk of confusion arises.
\medskip

There is a remarkable system of generators ---by no means minimal--- that can be attached to a numerical semigroup $\Gamma$: let $s \in \Gamma\setminus \{0\}$, the Ap\'ery set of $\Gamma$ with respect to $s$ is defined to be the set
\[
\mathrm{Ap}(\Gamma, s)=\{w\in\Gamma\,:\;w-s\notin \Gamma\}.
\]

Observe that the cardinality of $\mathrm{Ap}(\Gamma, s)$ is $s$, and that $\mathrm{Ap}(\Gamma, s)=\{w_0<w_1<\dots<w_{s-1}\}$ where $w_i=\min \{ z\in \Gamma : z \equiv i \ \mathrm{mod}\ s\}$; obviously, $w_0=0$. We will always consider the particular case $s=m:=m(\Gamma)$, for which $w_1=a_2$ and $w_{m-1}=c-1+a_1=c+m-1$. Moreover, the Apéry set with respect to the multiplicity $m$ has the following property.

\begin{lemma}\label{lem:apminimalgens}
Let $\Gamma=\langle a_1,\ldots , a_e \rangle$ be the numerical semigroup minimally generated by \(\{a_1,\dots,a_e\}\). We denote \(a_1:=m\), then 
\[
\{a_2,\dots,a_e\}\subset\mathrm{Ap}(\Gamma, m)=\{w\in\Gamma\,:\;w-m\notin \Gamma\}.
\]
\end{lemma}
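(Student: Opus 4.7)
The plan is a direct proof by contradiction, exploiting only the definition of the Apéry set together with the minimality of the generating system $\{a_1,\dots,a_e\}$.

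Fix $i \in \{2,\dots,e\}$. I first want to observe that $a_i > m$: indeed, $m = a_1 = \min(\Gamma\setminus\{0\})$ is the multiplicity, so every nonzero element of $\Gamma$ is at least $m$, and $a_i = m$ would force $a_i = a_1$, contradicting the fact that $\{a_1,\dots,a_e\}$ is a system of generators of size $e$ (equivalently, that the $a_j$ are pairwise distinct in a \emph{minimal} generating set). Hence $a_i - m > 0$.

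Next, I argue that $a_i - m \notin \Gamma$, which is exactly what is needed to conclude $a_i \in \mathrm{Ap}(\Gamma, m)$. Suppose for contradiction that $a_i - m \in \Gamma$. Since $a_i - m > 0$, we can write
\[
a_i - m = \sum_{j=1}^{e} c_j a_j
\]
with $c_j \in \mathbb{N}$ and $\sum_j c_j \geq 1$. Substituting $m = a_1$ then gives
\[
a_i = a_1 + \sum_{j=1}^{e} c_j a_j,
\]
which is a representation of $a_i$ as an $\mathbb{N}$-linear combination of the generators in which the total number of summands is at least $2$. This contradicts the minimality of the system $\{a_1,\dots,a_e\}$, because a minimal generator cannot be obtained as a sum of two or more (possibly repeated) generators of $\Gamma$.

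The main obstacle is essentially bookkeeping: one must be careful to rule out the degenerate case $a_i - m = 0$ separately (handled above by $i\geq 2$), and one must remember that ``minimal generator'' means \emph{irredundant}, i.e.\ not expressible as a nontrivial sum of elements of $\Gamma\setminus\{0\}$; this is the precise property that produces the contradiction. With these two points clear, the lemma follows immediately, and in fact the argument shows more: $\{a_1,a_2,\dots,a_e\}\setminus\{a_1\}$ is contained in $\mathrm{Ap}(\Gamma,m)\setminus\{0\}$, in agreement with $|\mathrm{Ap}(\Gamma,m)| = m$ and $e\leq m$.
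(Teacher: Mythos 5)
Your argument is correct and is essentially the same as the paper's: the paper also assumes $a_j\notin\mathrm{Ap}(\Gamma,m)$, writes $a_j=\nu+m$ with $\nu\in\Gamma$, $\nu<a_j$, and derives a contradiction with the minimality of the generator. Your version merely spells out the two details the paper leaves implicit (that $a_i-m\neq 0$ for $i\geq 2$, and why a decomposition $a_i=a_1+\nu$ with $\nu\in\Gamma\setminus\{0\}$ violates minimality), so no substantive difference.
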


\begin{proof}
	If \(a_j\notin\mathrm{Ap}(\Gamma, m)\) then \(a_j=\nu+m\) with \(\nu\in\Gamma\) and \(\nu<a_j\), which contradicts the fact that \(a_j\) is a minimal generator of \(\Gamma\).
\end{proof}

\medskip

A numerical semigroup $\Gamma$ can be endowed with the following partial ordering: for any $s,t\in \Gamma$ we set \(s\preceq t\) if and only if there exists  \(u\in\Gamma\) such that \(s+u=t\). Thus we may define the following two subsets of the Ap\'ery set of $\Gamma$ with respect to the multiplicity $m=m(\Gamma)$: 
\[\mathrm{min}~\mathrm{Ap}(\Gamma,m):=\{w\in\mathrm{Ap}(\Gamma,m)\setminus\{0\}|\;w\,\text{is minimal with respect to}\preceq\},\]
\[\mathrm{max}~\mathrm{Ap}(\Gamma,m):=\{w\in\mathrm{Ap}(\Gamma,m)\setminus\{0\}|\;w\,\text{is maximal with respect to}\preceq\}.\]
The latter set leads to the definition of the \emph{type} of \(\Gamma\) as the cardinality \(t(\Gamma):=|\mathrm{max}~\mathrm{Ap}(\Gamma,m)|\). An important property of the type of a semigroup was proven in \cite[Theorem 22]{fgh}.

\begin{proposition}[Fr\"oberg, Gottlieb, H\"aggkvist]\label{prop:type}
	Let \(\Gamma\) be a numerical semigroup, then \[c(\Gamma)\leq\delta(\Gamma)\cdot(t(\Gamma)+1).\]
\end{proposition}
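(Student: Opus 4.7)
The plan is to establish a surjective map from the gaps of $\Gamma$ to the set of pseudo-Frobenius numbers, and then to bound the size of the fibers by $\delta(\Gamma)$. Recall first that $\mathrm{max}\,\mathrm{Ap}(\Gamma,m)$ is in bijection with the set
\[
PF(\Gamma) := \{x \in \mathbb{Z}\setminus\Gamma : x + \gamma \in \Gamma \text{ for every } \gamma \in \Gamma\setminus\{0\}\}
\]
via the map $w \mapsto w-m$, so $t(\Gamma) = |PF(\Gamma)|$. This reformulation makes it easier to work with elements that live outside $\Gamma$, the same side of the inclusion as the gaps.

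Next I would define a map $\varphi : \mathbb{N}\setminus\Gamma \to PF(\Gamma)$ by sending each gap $\ell$ to
\[
\varphi(\ell) := \max\bigl\{f \in \mathbb{Z}\setminus\Gamma : f-\ell \in \Gamma\bigr\}.
\]
This is well defined: the set is nonempty (take $f=\ell$, since $\ell-\ell=0\in\Gamma$) and finite (it is contained in $\mathbb{N}\setminus\Gamma$). One has to check that $\varphi(\ell)\in PF(\Gamma)$: for any $\gamma\in\Gamma\setminus\{0\}$, the number $\varphi(\ell)+\gamma$ strictly exceeds $\varphi(\ell)$ and satisfies $(\varphi(\ell)+\gamma)-\ell=(\varphi(\ell)-\ell)+\gamma\in\Gamma$, so maximality forces $\varphi(\ell)+\gamma\in\Gamma$, as required.

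Finally I would bound the fibres. For $f\in PF(\Gamma)$,
\[
\varphi^{-1}(f)\subseteq\{\ell\in\mathbb{N}\setminus\Gamma : f-\ell\in\Gamma\}=\{f-s : s\in\Gamma,\ 0\leq s\leq f\}\cap(\mathbb{N}\setminus\Gamma).
\]
Since $f\leq F(\Gamma)=c(\Gamma)-1$, every such $s$ satisfies $s<c(\Gamma)$, hence is a small element of $\Gamma$. Therefore $|\varphi^{-1}(f)|\leq\delta(\Gamma)$. Partitioning $\mathbb{N}\setminus\Gamma$ into the fibres of $\varphi$ yields $g(\Gamma)\leq t(\Gamma)\cdot\delta(\Gamma)$, and using $c(\Gamma)=\delta(\Gamma)+g(\Gamma)$ (because $\{0,1,\dots,c(\Gamma)-1\}$ splits into small elements of $\Gamma$ and gaps) gives the desired inequality $c(\Gamma)\leq\delta(\Gamma)\cdot(t(\Gamma)+1)$. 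The only delicate step is verifying that $\varphi(\ell)$ is really a pseudo-Frobenius number; everything else is a double counting, and I do not anticipate any serious obstacle beyond translating correctly between the pseudo-Frobenius description and the maximal Apéry set description of the type.
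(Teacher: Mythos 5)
Your argument is correct and complete: the paper itself gives no proof of this proposition (it is quoted from Fr\"oberg--Gottlieb--H\"aggkvist, \emph{Semigroup Forum} \textbf{35} (1987), Theorem 22), and your double-counting of the gaps along the fibres of $\ell\mapsto\max\{f\in\mathbb{Z}\setminus\Gamma : f-\ell\in\Gamma\}$ is essentially the classical proof given there, yielding $g(\Gamma)\leq t(\Gamma)\delta(\Gamma)$ and hence $c(\Gamma)=\delta(\Gamma)+g(\Gamma)\leq\delta(\Gamma)(t(\Gamma)+1)$. The only ingredient you use without proof is the identification $t(\Gamma)=|PF(\Gamma)|$ via $w\mapsto w-m$ on $\mathrm{max}\,\mathrm{Ap}(\Gamma,m)$, which is standard (see Rosales--Garc\'ia-S\'anchez, Proposition 2.20) and harmless to cite.
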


In particular, any numerical semigroup satisfying \(t(\Gamma)+1\leq e(\Gamma)\) satisfies Wilf conjecture. For example, any numerical semigroup satisfying \(1\leq m(\Gamma)-e(\Gamma)\leq 2\) has \(t(\Gamma)+1\leq e(\Gamma)\) as the proof of \cite[Lemma 6]{samartano} shows.

%


\begin{rem}
	Not every numerical semigroup satisfies the property that \(t(\Gamma)+1\leq e(\Gamma)\), for example \(\Gamma=\langle 213,216,226,227\rangle\) is a numerical semigroup with \(t(\Gamma)=14\) and \(e(\Gamma)=4\), as one can check with GAP \cite{gapdelgado,gap}.
\medskip
	
Observe also that \(\Gamma\) belongs to the family given in \cite{fgh} defined as $\Gamma_{s,n}=\langle s,s+3,s+3n+1,s+3n+2 \rangle$ for $n\geq 2$, $r\geq 3n+2$ and $s=r(3n+2)+3$. However in \cite{fgh} they wrongly claim, as the previous example shows, that this family has type $t(\Gamma_{s,n})=2n+3$. We thank D.I. Stamate for letting us know this mistake, see his Example 3.3. in \cite{stamate}. 
\end{rem}

Given a numerical semigroup $\Gamma$, we will be concerned with the study of the integral interval $[0,c+m]$; of course, this interval is meant to be $[0,c+m]\cap \mathbb{N}$, but we leave out the intersection with $\mathbb{N}$ in order to discharge the notation, and we will assume this and all occurring intervals to be in $\mathbb{N}$. In analogy to \cite{mossam}, we will make a partition of this interval in subintervals of length $m-1$, say
\[
I_\alpha:=[\alpha m,(\alpha+1)m-1], \ \mbox{for} \ \alpha =0, 1, \ldots, \Big \lfloor \frac{c+m}{m} \Big \rfloor.
\]

Let $L:=\lfloor \frac{c-1}{m}\rfloor=\lfloor \frac{w_{m-1}}{m}\rfloor-1$ denote the integer part of the quotient between the conductor of \(\Gamma\) and its multiplicity minus $1$. Hence,  we can write \(c-1=Lm+\rho'\) with \(0\leq \rho'\leq m-1\) and \(\rho'\neq 0\) because \(c-1\neq \Gamma\). Therefore, we can rewrite \(c= Lm+\rho\) with \(\rho=\rho'+1\) and \(2\leq\rho\leq m\).
\medskip

Following the notation of \cite{samartano} and \cite{mossam}, for $j=1, \ldots , m-1$ we define
$$
\eta_j=|\{\alpha\in\mathbb{N} : | I_\alpha\cap\Gamma |=j\}|\quad\text{and}\quad\epsilon_j=|\{\alpha\in\mathbb{N} : | I_\alpha\cap\Gamma |=j,\,0\leq\alpha\leq L-1\}|,
$$
as well as
$$
n_{\alpha}=|\{s\in \Gamma \cap I_{\alpha}: s<F \}|.
$$

The numbers $\eta_j,\epsilon_j$ can be computed from the Ap\'ery set $\mathrm{Ap}(\Gamma, m)$ in the following way:

\begin{lemma}[\cite{samartano}, Proposition 13]\label{lemma:21}
For any $j=1, \ldots , m-1$ we have
	\[
	\eta_j=\Big \lfloor \frac{w_j}{m}\Big \rfloor-\Big \lfloor \frac{w_{j-1}}{m}\Big \rfloor.
	\]
\end{lemma}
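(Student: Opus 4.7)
The plan is to read off the structure of $\Gamma$ from its Apéry set. Since $\mathrm{Ap}(\Gamma,m)$ contains exactly one representative in each residue class modulo $m$, one has the disjoint decomposition
$$\Gamma=\bigsqcup_{i=0}^{m-1}(w_i+m\mathbb{N}),$$
with $0=w_0<w_1<\cdots<w_{m-1}$ ordered by size. This reduces the count of $\Gamma$-elements inside each window $I_\alpha$ to a threshold computation involving the integers $f_i:=\lfloor w_i/m\rfloor$.

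Concretely, each arithmetic progression $w_i+m\mathbb{N}$ meets $I_\alpha$ in exactly one point whenever $\alpha\geq f_i$, and in none otherwise. Summing over $i$ I would obtain the key identity
$$|I_\alpha\cap\Gamma|=\bigl|\{i\in\{0,\ldots,m-1\}:f_i\leq\alpha\}\bigr|.$$
Next I would observe that the strict inequalities $w_0<w_1<\cdots<w_{m-1}$ force the sequence $(f_i)$ to be weakly increasing. Consequently, the condition $|I_\alpha\cap\Gamma|=j$ is equivalent to $f_{j-1}\leq\alpha<f_j$, an inequality satisfied by precisely $f_j-f_{j-1}$ integers $\alpha$. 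This yields $\eta_j=\lfloor w_j/m\rfloor-\lfloor w_{j-1}/m\rfloor$, which is exactly the content of the lemma.

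There is essentially no obstacle here; the proof is a bookkeeping exercise on the residue-class decomposition induced by the Apéry set. The only subtle point worth flagging is that the hypothesis $1\leq j\leq m-1$ is essential: if one allowed $j=m$, then every $\alpha\geq f_{m-1}$ would satisfy $I_\alpha\subseteq\Gamma$, so $\eta_m$ would be infinite and the formula above would become vacuous.
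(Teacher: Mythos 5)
Your proof is correct: the paper itself gives no argument for this lemma, importing it verbatim from Sammartano's Proposition 13, and your residue-class decomposition $\Gamma=\bigsqcup_i (w_i+m\mathbb{N})$ together with the monotone thresholds $f_i=\lfloor w_i/m\rfloor$ is exactly the standard proof of that result, with the finiteness caveat for $j<m$ correctly handled. The one point worth making explicit is that the formula is only valid with the $w_i$ ordered by size (as you take them), not under the residue-class indexing $w_i=\min\{z\in\Gamma : z\equiv i \bmod m\}$ that Section 2 of the paper also writes down --- your (silent) resolution of that ambiguity in favour of the size ordering is the right one, since otherwise the sequence $(f_i)$ need not be weakly increasing and the right-hand side could even be negative.
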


Moreover, Sammartano \cite[Proposition 9]{samartano} proved:

\begin{lemma}\label{lemmaS}
The numbers $n_{\alpha}$ satisfy the following properties:
\begin{itemize}
\item[(i)] For $\alpha=0,\ldots , L$, we have that $1\leq n_{\alpha}=|\Gamma\cap I_{\alpha}| \leq m-1$.
\item[(ii)] If $0\leq \alpha <\beta \leq L-1$, then $n_{\alpha} \leq n_{\beta}$.
\item[(iii)] $\delta(\Gamma)=n_0+n_1+\cdots + n_L$.
\end{itemize}
\end{lemma}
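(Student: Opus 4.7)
The plan is to establish the three claims more or less independently, relying on the basic observation that $m \in \Gamma$ implies $\alpha m \in \Gamma$ for every $\alpha \geq 0$, together with the inequality $Lm < F$ guaranteed by $\rho \geq 2$ in the decomposition $c = Lm + \rho$.

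For (i), to obtain the lower bound $n_\alpha \geq 1$ I would simply note that $\alpha m \in \Gamma$ and that $\alpha m \leq Lm < Lm + \rho - 1 = F$ whenever $\alpha \leq L$, so $\alpha m$ is counted in $n_\alpha$. For the upper bound $n_\alpha \leq m-1$ I would argue by contradiction: if $I_\alpha \subseteq \Gamma$ for some $\alpha \leq L$, then adding $m$ (which lies in $\Gamma$) to every element shows $I_{\alpha+1}\subseteq \Gamma$, and iterating gives $[\alpha m,\infty)\subseteq \Gamma$; this forces $c\leq \alpha m \leq Lm$, contradicting $c=Lm+\rho$ with $\rho\geq 2$. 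Hence $|I_\alpha\cap\Gamma|\leq m-1$ for $\alpha\leq L$, and a fortiori $n_\alpha\leq m-1$.

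For (ii), the key point is that when $\alpha\leq L-1$ every element of $\Gamma\cap I_\alpha$ automatically satisfies $s\leq (\alpha+1)m-1\leq Lm-1 < F$, so $n_\alpha = |\Gamma\cap I_\alpha|$. With this identification, the translation map
\[
\varphi \colon \Gamma\cap I_\alpha \longrightarrow \Gamma\cap I_\beta, \qquad s\longmapsto s+(\beta-\alpha)m,
\]
is well-defined (since $m\in\Gamma$ and the translation of $I_\alpha$ by $(\beta-\alpha)m$ is exactly $I_\beta$) and is manifestly injective. Hence $n_\alpha=|\Gamma\cap I_\alpha|\leq |\Gamma\cap I_\beta|=n_\beta$.

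For (iii), I would simply partition the set $\{x\in\Gamma : x<F\}$ according to which interval $I_\alpha$ contains $x$. Since $F\in I_L$ and every element of $I_\alpha$ with $\alpha>L$ is at least $(L+1)m>F$, only the intervals $I_0,\dots,I_L$ contribute, giving a disjoint union whose cardinality is $\sum_{\alpha=0}^{L} n_\alpha$. I do not anticipate any serious obstacle here; the only subtlety worth highlighting is the careful use of the hypothesis $\rho\geq 2$ in (i), which is what ensures both that $Lm$ is a bona fide element of $\Gamma$ strictly below $F$ and that $I_\alpha$ cannot be entirely contained in $\Gamma$ for $\alpha\leq L$.
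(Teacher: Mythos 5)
Your argument is essentially correct, and since the paper gives no proof of this lemma (it is quoted from Sammartano's Proposition 9), there is nothing internal to compare against. Parts (ii) and (iii) are handled cleanly: the observation that every $s\in\Gamma\cap I_\alpha$ with $\alpha\le L-1$ satisfies $s\le Lm-1<F$, so that $n_\alpha=|\Gamma\cap I_\alpha|$ there, makes the translation $s\mapsto s+(\beta-\alpha)m$ do all the work in (ii); and the partition of $\{x\in\Gamma: x<F\}$ by the intervals $I_0,\dots,I_L$ gives (iii) at once. The bounds $1\le n_\alpha\le m-1$ in (i) are also proved correctly, including the contradiction argument that $I_\alpha\subseteq\Gamma$ would propagate to $[\alpha m,\infty)\subseteq\Gamma$ and force $c\le Lm$, against $\rho\ge 2$.

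The one thing you do not address is the middle equality $n_\alpha=|\Gamma\cap I_\alpha|$ asserted in (i) for \emph{all} $\alpha\le L$, and in fact you could not have: with the conventions of this paper it fails at $\alpha=L$ whenever $\rho<m$. The interval $I_L=[Lm,Lm+m-1]$ contains the $m-\rho$ integers of $[c,Lm+m-1]$, all of which belong to $\Gamma$ but are $\ge F$ and hence excluded from $n_L$, so $|\Gamma\cap I_L|=n_L+(m-\rho)$. Concretely, for $\Gamma=\langle 3,5\rangle$ one has $m=3$, $F=7$, $L=2$, $\rho=2$, $I_2=[6,8]$ and $\Gamma\cap I_2=\{6,8\}$, whereas $n_2=|\{6\}|=1$. (In Sammartano's original formulation the intervals are anchored at the conductor rather than at $0$, which is why the equality holds there.) Your proof correctly establishes everything that is true and actually used later --- the bounds in (i), the monotonicity (ii), and the sum formula (iii) --- so the only adjustment needed is to restrict the equality in (i) to $\alpha\le L-1$, as you implicitly do, or to note that the statement requires this correction.
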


The leitmotiv of this paper is to study the ratio \(c(\Gamma)/\delta(\Gamma)\). If \(k\in\mathbb{Z}\), we call \emph{Wilf type inequality} to an inequality of the shape
\[k\delta(\Gamma)\geq c(\Gamma).\]
Observe that if we have to our disposal a Wilf type inequality then the ratio \(c(\Gamma)/\delta(\Gamma)\leq k\). The classification of those \(k\in\mathbb{Z}\) such that \(k\) satisfies a Wilf type inequality is crucial for our purpose. With the previous notation, one can find the following characterization for a \(k\) satisfying a Wilf type inequality.

\begin{proposition}\label{prop:tecnica}
Let $k\in \mathbb{Z}$. Preserving notation as above, a numerical semigroup $\Gamma$ satisfies the inequality $c(\Gamma)\leq k\delta(\Gamma)$ if and only if
\begin{equation}\label{eqn:doble}
\sum_{j=0}^{L} (kn_j-m(\Gamma)) +m(\Gamma)-\rho \geq 0.
\end{equation}
\end{proposition}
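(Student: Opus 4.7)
The proof should be a direct algebraic manipulation; the statement is essentially a reformulation of $c(\Gamma)\le k\delta(\Gamma)$ in a form that distributes the contribution of each sub‑interval $I_\alpha$ separately, which will be convenient for the later inductive or interval‑by‑interval arguments.

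My plan is to start from the equivalent inequality $k\delta(\Gamma)-c(\Gamma)\ge 0$ and substitute the two available identities: first, the decomposition of the conductor
\[
c(\Gamma)=Lm(\Gamma)+\rho,\qquad 2\le\rho\le m(\Gamma),
\]
introduced right before the statement; and second, the partition formula for the delta‑invariant,
\[
\delta(\Gamma)=n_0+n_1+\cdots+n_L,
\]
provided by Lemma \ref{lemmaS}(iii). Plugging both in gives
\[
k\delta(\Gamma)-c(\Gamma)=k\sum_{j=0}^{L}n_j-Lm(\Gamma)-\rho.
\]

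The only remaining step is to absorb the term $-Lm(\Gamma)$ into the sum. Since there are $L+1$ indices $j\in\{0,1,\ldots,L\}$, I rewrite $Lm(\Gamma)=(L+1)m(\Gamma)-m(\Gamma)$, so that
\[
k\sum_{j=0}^{L}n_j-Lm(\Gamma)-\rho=\sum_{j=0}^{L}\bigl(kn_j-m(\Gamma)\bigr)+m(\Gamma)-\rho.
\]
This quantity being non‑negative is exactly the displayed inequality \eqref{eqn:doble}, and since every step is an identity the chain of equivalences goes both ways.

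There is really no obstacle here: the proof is a one‑line bookkeeping argument once Lemma \ref{lemmaS}(iii) and the decomposition $c=Lm+\rho$ are in place. The only point meriting a brief comment is to make sure that the index range in the sum is $0,\ldots,L$ (so $L+1$ terms) rather than $0,\ldots,L-1$, since this is what forces the extra $+m(\Gamma)-\rho$ correction term and distinguishes \eqref{eqn:doble} from a cleaner formula; this is precisely the reason one expects the inequality, in applications, to be tight only when $\rho=m(\Gamma)$.
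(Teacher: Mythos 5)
Your proof is correct and follows essentially the same route as the paper's: both substitute $c(\Gamma)=Lm(\Gamma)+\rho$ and $\delta(\Gamma)=\sum_{j=0}^{L}n_j$ from Lemma \ref{lemmaS}(iii), then rewrite $Lm(\Gamma)=(L+1)m(\Gamma)-m(\Gamma)$ to distribute $m(\Gamma)$ over the $L+1$ summands, yielding a chain of equivalences. Nothing is missing.
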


\begin{proof}
By using Lemma \ref{lemmaS}, it is easily checked that
\begin{align*}
c(\Gamma)\leq k\delta(\Gamma) \ \Longleftrightarrow  Lm(\Gamma)+\rho \leq k\sum_{j=0}^{L} n_j&\Longleftrightarrow \sum_{j=0}^L m(\Gamma)+\rho -m(\Gamma) \leq k\sum_{j=0}^{L} n_j \\
&\Longleftrightarrow \sum_{j=0}^L (kn_j-m(\Gamma))+m(\Gamma)-\rho \geq 0.
\end{align*}
\end{proof}

Observe that Proposition \ref{prop:tecnica} is an analogue to \cite[Proposition 10]{samartano} but considering an integer $k$ instead of $e(\Gamma)$ and the summation running up to \(L\). In fact, notice that the inequality (\ref{eqn:doble}) can be reformulated as
$$
\sum_{j=0}^{L-1} (kn_j-m(\Gamma)) +(n_Lk-\rho) \geq 0.
$$

As a consequence of Proposition \ref{prop:tecnica} we obtain

\begin{corollary}\label{cor:useful}
A numerical semigroup $\Gamma$ satisfies $c(\Gamma)\leq m(\Gamma) \delta(\Gamma)$ if and only if
$$
\sum_{j=0}^{L-1} (n_j m-m) +n_Lm-\rho = m\sum_{j=0}^{L}(n_j-1)+m-\rho \geq 0.
$$
\end{corollary}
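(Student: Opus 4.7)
The approach is a direct specialization of Proposition \ref{prop:tecnica} to the particular value $k = m(\Gamma)$, followed by a cosmetic algebraic rearrangement. Since Proposition \ref{prop:tecnica} is an ``if and only if'' valid for every $k \in \mathbb{Z}$, setting $k := m(\Gamma) = m$ instantly translates the inequality $c(\Gamma) \leq m(\Gamma)\delta(\Gamma)$ into
$$\sum_{j=0}^{L}(m n_j - m) + m - \rho \geq 0.$$
Factoring $m$ out of the summand yields $m\sum_{j=0}^{L}(n_j - 1) + m - \rho \geq 0$, which is the rightmost expression in the statement of the corollary.

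To obtain the middle expression, I would simply split the term $j = L$ off from the sum: writing
$$\sum_{j=0}^{L}(m n_j - m) = \sum_{j=0}^{L-1}(n_j m - m) + (n_L m - m),$$
and absorbing the trailing $-m$ into the boundary term $m - \rho$, one lands on
$$\sum_{j=0}^{L-1}(n_j m - m) + n_L m - \rho \geq 0.$$
The equality between the two displayed forms inside the corollary is then transparent: both collapse to $m\sum_{j=0}^{L} n_j - Lm - \rho$.

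Since the statement is nothing more than the instantiation $k = m(\Gamma)$ of Proposition \ref{prop:tecnica} together with a trivial regrouping of $-m$ terms, I do not expect any genuine obstacle. The proof reduces to a single invocation of the proposition followed by the elementary identity above; the bulk of the effort has already been carried out when establishing Proposition \ref{prop:tecnica} via Lemma \ref{lemmaS}.
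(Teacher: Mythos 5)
Your proposal is correct and matches the paper exactly: the corollary is stated there as an immediate consequence of Proposition \ref{prop:tecnica} with $k=m(\Gamma)$, and your regrouping of the $-m$ terms to reconcile the two displayed forms is the same trivial algebra the paper leaves implicit. Nothing further is needed.
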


These technical results conclude all elementary properties we need in order to define the main tool of the article, namely the Wilf function of $\Gamma$.

\section{Wilf function of a numerical semigroup}\label{sec:wilffunction}

In this section we are going to introduce a new instrument for the study of the Wilf conjecture. Our approach takes into consideration the study of the behaviour of the map
\[\begin{array}{cccl}
W_{\Gamma}:&\mathbb{N}&\rightarrow&\mathbb{Z}\\
&k&\mapsto &W_{\Gamma}(k):=k\delta(\Gamma)-c(\Gamma).
\end{array}\]
The function $W_{\Gamma}$ will be called the \emph{Wilf function} of the semigroup $\Gamma$. As already mentioned, for $k=e(\Gamma)=e$, the nonnegativity $W_{\Gamma}(e)\geq 0$ expresses thus the statement of Wilf's conjecture; indeed, the Wilf function contributes to the understanding of Wilf's conjecture. For the moment we will investigate this function along the remainder of the current section.
\medskip

First of all, we recall that a numerical semigroup $\Gamma$ is said to be symmetric if for every $z\in \mathbb{Z}$ one has that $z\in \Gamma \ \Longleftrightarrow \ F(\Gamma)-z \notin \Gamma$. This is equivalent to say that $c(\Gamma)=2g(\Gamma)=2\delta(\Gamma)$, see e.g. \cite[Corollary 4.5]{RosalesGarciaSanchez}. In particular, the Wilf function is nonpositive for $k=2$:

\begin{proposition}\label{prop:2}
Let $\Gamma$ be a numerical semigroup, then $W_{\Gamma}(2)\leq 0$, and the equality holds if and only if $\Gamma$ is symmetric.
\end{proposition}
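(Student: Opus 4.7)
The plan is to rewrite $W_\Gamma(2)$ as a difference between the number of gaps and the number of small elements of $\Gamma$, and then use the classical Frobenius--Sylvester involution $x \mapsto F(\Gamma)-x$ to compare the two.

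First I would note that the interval $[0,c(\Gamma)-1]$ partitions into small elements of $\Gamma$ (those $x\in\Gamma$ with $x<c(\Gamma)$) and gaps (elements of $\mathbb{N}\setminus\Gamma$, which all lie below $c(\Gamma)$). Hence
\[
c(\Gamma)=\delta(\Gamma)+g(\Gamma),
\]
and consequently
\[
W_\Gamma(2)=2\delta(\Gamma)-c(\Gamma)=\delta(\Gamma)-g(\Gamma).
\]
So the proposition reduces to the inequality $\delta(\Gamma)\leq g(\Gamma)$ together with the identification of the equality case.

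Next I would define the map
\[
\phi\colon\{x\in\Gamma:0\leq x\leq F(\Gamma)\}\longrightarrow\mathbb{N}\setminus\Gamma,\qquad \phi(x)=F(\Gamma)-x.
\]
If $x\in\Gamma$ with $x\leq F(\Gamma)$, then $0\leq F(\Gamma)-x\leq F(\Gamma)$, and if $F(\Gamma)-x$ also belonged to $\Gamma$ we would get $F(\Gamma)=x+(F(\Gamma)-x)\in\Gamma$, a contradiction. Hence $\phi$ lands in the set of gaps. It is clearly injective, because $\phi(x)=\phi(y)$ forces $x=y$. Since the domain has cardinality $\delta(\Gamma)$ (recall $F(\Gamma)\notin\Gamma$, so $\{x\in\Gamma:x\leq F(\Gamma)\}=\{x\in\Gamma:x<F(\Gamma)\}$), this yields $\delta(\Gamma)\leq g(\Gamma)$, and therefore $W_\Gamma(2)\leq 0$.

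Finally, equality $W_\Gamma(2)=0$ holds if and only if $\delta(\Gamma)=g(\Gamma)$, which is equivalent to $\phi$ being surjective, i.e.\ to every gap $y$ satisfying $F(\Gamma)-y\in\Gamma$. This is exactly the symmetry condition $z\in\Gamma\Longleftrightarrow F(\Gamma)-z\notin\Gamma$ recalled in the text just before the statement, and the equivalent formulation $c(\Gamma)=2\delta(\Gamma)$ can also be read off directly from $c(\Gamma)=\delta(\Gamma)+g(\Gamma)$. There is no real obstacle here; the only point to be careful about is the bookkeeping at the endpoint $F(\Gamma)$ when comparing $\{x\in\Gamma:x<F(\Gamma)\}$ with $\{x\in\Gamma:x\leq F(\Gamma)\}$, which is harmless since $F(\Gamma)$ is a gap.
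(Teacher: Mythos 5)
Your proof is correct and complete. The paper itself disposes of this proposition in two lines: it declares the inequality $W_\Gamma(2)\leq 0$ ``clear'' and refers to Serre for the equality case, whereas you supply the underlying classical argument in full --- the decomposition $c(\Gamma)=\delta(\Gamma)+g(\Gamma)$, the resulting identity $W_\Gamma(2)=\delta(\Gamma)-g(\Gamma)$, and the injection $x\mapsto F(\Gamma)-x$ from small elements into gaps, with surjectivity characterizing symmetry. This is exactly the fact the paper is implicitly invoking, so the mathematics is the same; your version simply makes it self-contained, and your careful handling of the endpoint $F(\Gamma)$ (harmless because $F(\Gamma)$ is a gap) is the right bookkeeping. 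One incidental benefit of your computation: it makes plain that the correct direction of the classical inequality is $c(\Gamma)\geq 2\delta(\Gamma)$, whereas the paper's proof writes ``$c(\Gamma)\leq 2\delta(\Gamma)$'', which is a typo (that inequality would give $W_\Gamma(2)\geq 0$, the opposite of what is claimed); your argument resolves any ambiguity there.
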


\begin{proof}
Since $c(\Gamma)\leq 2 \delta (\Gamma)$, then the first assertion is clear. The equality holds in virtue of \cite[p.~80, Proposition~7]{serre}.
\end{proof}

A more involved result assures that the Wilf function $W_{\Gamma}(k)$ is nonnegative for $k=m$:

\begin{theorem}\label{thm:m}
Let $\Gamma$ be a numerical semigroup of multiplicity \(m\), then $W_{\Gamma}(m)\geq 0$, and the equality holds if and only if $\Gamma=\langle m, qm+1, \ldots , qm+(m-1)\rangle$ for integers $m,q$ such that $m>1$ and $q>0$.
\end{theorem}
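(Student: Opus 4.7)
The plan is to reduce the inequality $W_\Gamma(m) \ge 0$ directly to Corollary~\ref{cor:useful}, which restates it as
\[
m\sum_{j=0}^{L}(n_j - 1) + (m - \rho) \ge 0.
\]
Both summands are manifestly nonnegative: $n_j \ge 1$ because $jm$ always lies in $\Gamma \cap I_j$ and satisfies $jm < F$ (using $\rho \ge 2$), and $m - \rho \ge 0$ by the choice $\rho \in [2, m]$. This settles the inequality.

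For the equality characterization, the reformulation above forces both summands to vanish simultaneously, so one must have $\rho = m$ (equivalently $c = (L+1)m$ and $F = (L+1)m - 1$) together with $n_j = 1$ for every $j = 0, 1, \ldots, L$. Since $jm \in \Gamma \cap I_j$ always contributes to $n_j$, the condition $n_j = 1$ pins down $\Gamma \cap I_j \cap [0, F-1] = \{jm\}$. Combining this with the tail $[c, \infty) \subseteq \Gamma$ and $F \notin \Gamma$ yields the explicit description
\[
\Gamma = \{0, m, 2m, \ldots, Lm\} \cup [(L+1)m, \infty),
\]
that is, $\Gamma = W_{m, q}$ with $q = L + 1 \ge 1$. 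A routine verification (using $\mathrm{Ap}(\Gamma, m) = \{0, qm+1, qm+2, \ldots, qm+(m-1)\}$ and invoking Lemma~\ref{lem:apminimalgens}) identifies the minimal generating set as $\{m, qm+1, \ldots, qm+(m-1)\}$.

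The converse direction is a direct computation: for $\Gamma = \langle m, qm+1, \ldots, qm+(m-1) \rangle$ one reads off $c(\Gamma) = qm$ and $\delta(\Gamma) = |\{0, m, \ldots, (q-1)m\}| = q$, whence $W_\Gamma(m) = mq - qm = 0$. The main subtlety anticipated in the whole argument is the handling of the boundary interval $I_L$, where the discrepancy between the count $n_L$ and the raw size $|\Gamma \cap I_L|$ is governed precisely by the assumption $\rho = m$; it is this alignment that allows one to pass cleanly from $n_L = 1$ to the explicit description of $\Gamma$ on $[Lm, (L+1)m-1]$. Apart from this point, the proof is essentially a translation of Corollary~\ref{cor:useful}.
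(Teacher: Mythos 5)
Your proposal is correct, and for the equality characterization it takes a genuinely different and considerably shorter route than the paper. The paper only invokes Corollary~\ref{cor:useful} for the inequality $W_\Gamma(m)\geq 0$; for the equality case it switches to a computation of $\delta$ through the Ap\'ery set via Lemma~\ref{lemma:21}, derives $\rho=m$ from a divisibility argument, shows that all Ap\'ery elements share the integral part $\lfloor w_j/m\rfloor=L+1$, and then runs a contradiction argument on a putative non-generator $w_e$ to force $e=m$ before pinning down the generators. You instead observe that the proof of Proposition~\ref{prop:tecnica} actually gives the identity $W_\Gamma(m)=m\sum_{j=0}^{L}(n_j-1)+(m-\rho)$ as a sum of nonnegative terms, so vanishing forces $n_j=1$ for every $j$ and $\rho=m$ simultaneously; since $jm$ always lies in $\Gamma\cap I_j$ below $F$, this immediately yields $\Gamma=\{0,m,\ldots,Lm\}\cup[(L+1)m,\infty)=W_{m,L+1}$, and reading off the generators is then routine. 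Your handling of the boundary interval $I_L$ (where $\rho=m$ guarantees $F=(L+1)m-1$ is the only element of $I_L$ not below $F$, and it is not in $\Gamma$) is exactly the point that needs care, and you address it. What the paper's longer computation buys is the by-product recorded in the Remark following the theorem, namely the characterization of the vanishing of $B=(m-1)\lfloor w_{m-1}/m\rfloor-\sum_j\lfloor w_j/m\rfloor$ in terms of the Ap\'ery set, which your argument does not produce; conversely, your argument makes the maximal-embedding-dimension conclusion fall out of the explicit set description rather than requiring a separate contradiction step. The only (shared) pedantic caveat is that $\Gamma=\mathbb{N}$ also satisfies $W_\Gamma(m)=0$ with $m=1$ and is implicitly excluded by the hypothesis $m>1$ in the characterization.
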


\begin{proof}
The fact that $W_{\Gamma}(m)\geq 0$ follows straightforward from Corollary \ref{cor:useful}. In order to prove the characterization of the equality, the converse is clear: If $\Gamma = \langle m, qm+1, \ldots , qm+(m-1)\rangle$, then $W_{\Gamma}(m)=m\delta-c=mq-qm=0$. So let us prove the direct implication, and assume the existence of a semigroup $\Gamma=\langle a_1=m,a_2,\ldots , a_e \rangle $ with $a_i$ minimal generators such that $W_{\Gamma}(m)=m\delta-c=0$. 
\medskip

Our proof starts with the computation of $\delta$ by means of an extensive use of the statements in Lemma \ref{lemmaS} as well as Lemma \ref{lemma:21}:

\begin{align*}
\delta=&\sum_{j=0}^{L} n_j= \sum_{j=1}^{m-1}(\eta_j\cdot j) +\rho-m =  \sum_{j=1}^{m-1} \bigg (\sum_{i=j}^{m-1} \eta_i \bigg ) +\rho-m \\
=& \sum_{j=1}^{m-1} \bigg ( \sum_{i=j}^{m-1} \Big \lfloor \frac{w_i}{m}\Big \rfloor-\Big \lfloor \frac{w_{i-1}}{m}\Big \rfloor \bigg) + \rho-m =  \sum_{j=1}^{m-1} \Big ( \Big \lfloor \frac{w_{m-1}}{m}\Big \rfloor-\Big \lfloor \frac{w_{j-1}}{m}\Big \rfloor \Big ) +\rho - m \\
= & (m-1) \Big \lfloor \frac{w_{m-1}}{m}\Big \rfloor -\sum_{j=1}^{m-2} \Big \lfloor \frac{w_j}{m}\Big \rfloor+\rho - m\\
= & m \Big \lfloor \frac{w_{m-1}}{m} \Big \rfloor - \sum_{j=0}^{m-1} \Big \lfloor \frac{w_j}{m}\Big \rfloor +\rho - m.
\end{align*}

Now we appeal to the writing $c=Lm+\rho$, with $L=\Big \lfloor \frac{w_{m-1}-m}{m}\Big \rfloor =\Big \lfloor \frac{w_{m-1}}{m}\Big \rfloor -1$ and $2\leq \rho \leq m$.  By the previous equalities we deduce
\begin{align*}
0=m\delta-c=& m \bigg ( m \Big \lfloor \frac{w_{m-1}}{m} \Big \rfloor - \sum_{j=0}^{m-1} \Big \lfloor \frac{w_j}{m}\Big \rfloor +\rho - m  \bigg ) -Lm-\rho\\
=& m\bigg ( m \Big \lfloor \frac{w_{m-1}}{m} \Big \rfloor - \sum_{j=0}^{m-1} \Big \lfloor \frac{w_j}{m}\Big \rfloor +\rho - m \bigg ) -m \bigg ( \Big \lfloor \frac{w_{m-1}}{m} \Big \rfloor -1 \bigg ) -\rho\\
=&m\bigg ( (m-1)\Big \lfloor \frac{w_{m-1}}{m} \Big \rfloor -\sum_{j=0}^{m-1} \Big \lfloor \frac{w_{j}}{m} \Big \rfloor +1+\rho-m  \bigg )-\rho. 
\end{align*}

For the sake of simplicity set $A:=(m-1)\Big \lfloor \frac{w_{m-1}}{m} \Big \rfloor -\displaystyle \sum_{j=0}^{m-1} \Big \lfloor \frac{w_{j}}{m} \Big \rfloor +1+\rho-m$, then the previous reasoning show that $\rho=A \cdot m$, i.e., $\rho$ is a multiple of $m$ that varies in the range $2\leq \rho \leq m$, therefore it must be $A=1$ and $\rho=m$.
\medskip

We are now in a position to show that all Apéry elements have the same integral part if we divide them by the multiplicity of the semigroup. To this aim, we observe that, since $A=1$ and $\rho=m$, we have
$$
(m-1)\Big \lfloor \frac{w_{m-1}}{m} \Big \rfloor -\sum_{j=1}^{m-1} \Big \lfloor \frac{w_{j}}{m} \Big \rfloor =0.
$$
From this,
$$
(m-2)\Big \lfloor \frac{w_{m-1}}{m} \Big \rfloor =\sum_{j=1}^{m-2} \Big \lfloor \frac{w_{j}}{m} \Big \rfloor.
$$
Moreover, since the Apéry set is ordered by \(w_0<w_1<\dots,w_{m-1}\) it is clear that
$$
\sum_{j=1}^{m-2} \Big \lfloor \frac{w_{j}}{m} \Big \rfloor \leq (m-2) \Big \lfloor \frac{w_{m-1}}{m} \Big \rfloor,
$$
since for any $j=1,\ldots , m-1$ it holds that $0\leq \Big \lfloor \frac{w_{j}}{m} \Big \rfloor  \le \Big \lfloor \frac{w_{m-1}}{m} \Big \rfloor$. Altogether we obtain
$$
\Big \lfloor \frac{w_{m-1}}{m} \Big \rfloor =\Big \lfloor \frac{w_{j}}{m} \Big \rfloor  \ \mbox{for any} \ j=1,\ldots , m-1. 
$$

\medskip

We want now to inspect the form of the minimal generators $a_2,\ldots , a_e$. Starting with $a_2$, since this coincides with $w_1$, we have
$$
\Big \lfloor \frac{a_2}{m} \Big \rfloor = \Big \lfloor \frac{w_{m-1}}{m} \Big \rfloor = L+1,
$$
where the last equality holds by the writing $c=Lm+\rho=(L+1)m$ under our proven condition $\rho=m$. Therefore there exists an integer $\alpha_2$ such that
$$
a_2=(L+1)m+\alpha_2 \ \mbox{with} \ 1\leq \alpha_2 \leq m-1.
$$
Moreover, by Lemma \ref{lem:apminimalgens} there exists $j_2\in \{2,\ldots , e-1\}$ such that $a_3=w_{j_2}$ and again there is an integer $\alpha_3$ such that
$$
a_3=(L+1)m+\alpha_3 \ \mbox{with} \ 1\leq \alpha_3 \leq m-1.
$$
Since $a_2<a_3$, it is obvious that $1\leq \alpha_2<\alpha_3$. 
\medskip

An easy reasoning by induction provides the shape of each minimal generator of $\Gamma$, namely
$$
a_i=(L+1)m+\alpha_i \ \mbox{with} \ 1\leq \alpha_i \leq m-1.
$$
for any $i=2,\ldots , e$. It remains thus to prove that $e=m$ and $\alpha_i=i-1$ for $i=2,\ldots , e=m$. 
\medskip

On the contrary, suppose that $e<m$. From what has already been proved, we may write
\begin{equation}\label{eqn:star}
a_2=w_1<a_3=w_{j_2}<a_4=w_{j_3}<\cdots <a_{e}=w_{j_{e-1}}=w_{e-1} <w_e<\cdots < w_{m-1},
\end{equation}
where $\{j_2, j_3,\ldots , j_{e-1}\}=\{2,3,\ldots , e-2\}$. Without loss of generality we may assume that $j_i=i$ for any $i=2,\ldots , e-2$. Therefore $w_e$ is not a minimal generator of $\Gamma$, although it belongs itself to $\Gamma$. This guarantees the existence of nonnegative integers $b_0,b_1,\ldots , b_{e-1}$ such that
\begin{align*}
w_e=& b_0m+b_1w_1+b_2w_2+\cdots + b_{e-1}w_{e-1}\\
=& b_0 m + \sum_{j=1}^{e-1} \Big ( (L+1)m+\alpha_{j+1} \Big ) b_j  \\
=& b_0 m + \sum_{j=1}^{e-1} (L+1)m b_j  + \sum_{j=1}^{e-1} \alpha_{j+1} b_j  \\
=& \Big ( b_0+\sum_{j=1}^{e-1} (L+1)b_j\Big ) m + \sum_{j=1}^{e-1} \alpha_{j+1} b_j.
\end{align*}

Observe that the right-hand side summation is positive, since $b_j\geq 0$ and $\alpha_{j+1} >0$ for any $j=1,\ldots ,e-1$. There must thus exist $i_0\in\{1,\ldots , e-1\}$ such that $b_{i_0}\neq 0$. This allows us the writing
$$
w_e=b_{i_0} (L+1)m+\Big ( b_0 + \sum_{j=1\atop j\ne i_0}^{e-1} b_j (L+1) \Big ) m + \sum_{j=1}^{e-1} \alpha_{j+1} b_j.
$$
On the other hand, we may write
$$
w_e=(L+1)m+\epsilon \ \mbox{with} \ \alpha_{e-1}<\epsilon \leq m-1
$$
(the cases $\epsilon \leq \alpha_{e-1}$ are all excluded in view of equation (\ref{eqn:star})). This implies that $b_j=0$ for any $j\in\{1,\ldots, e-1\}$, $j\neq i_0$, and moreover $b_{i_0}=1$. By the above,
$$
w_e=(L+1)m+\alpha_{i_0}=w_{i_0-1}<w_e,
$$
a contradiction. Therefore $e=m$.
\medskip

Our next and last claim is that $\alpha_i=i-1$ for any $i=2,\ldots , m$; but this is easy, since there are $m-1$ values $\alpha_2<\alpha_3<\cdots < \alpha_m$ satisfying $1\leq \alpha_i \leq m-1$ each of them. 
\medskip

Hence $\Gamma=\langle m,(L+1)m+1,\ldots , (L+1)m+m-1\rangle$ with $m\geq 2$ and $L+1\geq 1$, and the proof is complete.
\end{proof}

\begin{rem}
	In the proof of Theorem \ref{thm:m} we have indeed shown the following:
	$$
	B:=(m-1) \Big \lfloor \frac{w_{m-1}}{m} \Big \rfloor - \sum_{j=0}^{m-1} \Big \lfloor \frac{w_{j}}{m} \Big \rfloor =0 \ \Longleftrightarrow \ \Gamma = \Big \langle m,  \Big \lfloor \frac{w_{m-1}}{m} \Big \rfloor m+1,\ldots ,  \Big \lfloor \frac{w_{m-1}}{m} \Big \rfloor m+m-1 \Big \rangle.
	$$
	In particular, since $B \geq 0$, we have the strict inequality $B>0$ for any numerical semigroup different from the one occurring in the statement of Theorem \ref{thm:m}.
\end{rem}
The annihilation of the Wilf function is related to the type of \(\Gamma\) in the following sense.
\begin{corollary}
	Let \(\Gamma\) be such that \(W_\Gamma(m)=0\) then \(t(\Gamma)=m-1\). In particular, \(t(\Gamma)=m-1\) if $\Gamma=\langle m, qm+1, \ldots , qm+(m-1)\rangle$ for integers $m,q$ such that $m>1$ and $q>0$. \end{corollary}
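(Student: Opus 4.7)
The plan is to apply Theorem \ref{thm:m} to reduce the corollary to a direct computation of the type of the family $W_{m,q}=\langle m, qm+1, \ldots, qm+(m-1)\rangle$. Indeed, the hypothesis $W_\Gamma(m)=0$ forces, by Theorem \ref{thm:m}, that $\Gamma$ has exactly this shape for some integers $m>1$, $q>0$. Thus both assertions of the corollary reduce to showing $t(W_{m,q})=m-1$.

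The next step is to identify the Apéry set of $W_{m,q}$ with respect to its multiplicity $m$. The elements of $W_{m,q}$ below $qm+1$ are precisely $0, m, 2m, \ldots, qm$, and every integer $\geq qm+1$ belongs to $W_{m,q}$ (since $qm+i$ is a generator for $1\le i\le m-1$, and $(q+1)m=qm+m\in \Gamma$). Hence the smallest element of $\Gamma$ in each congruence class modulo $m$ is $0$ for the class of $0$, and $qm+i$ for the class of $i$ with $1\le i\le m-1$. Therefore
\[
\mathrm{Ap}(\Gamma,m)=\{0,\,qm+1,\,qm+2,\,\ldots,\,qm+(m-1)\}.
\]

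Finally, I would verify that each of the $m-1$ nonzero Apéry elements is maximal with respect to the order $\preceq$. For $1\leq i<j\leq m-1$, the difference $(qm+j)-(qm+i)=j-i$ lies in $\{1,2,\ldots,m-2\}$, which is contained in $\mathbb{N}\setminus \Gamma$ because every nonzero nongap is at least $m$. Hence no nonzero Apéry element is $\preceq$ another, so all $m-1$ of them are maximal. This gives $t(\Gamma)=|\mathrm{max}~\mathrm{Ap}(\Gamma,m)|=m-1$, proving both parts of the corollary.

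There is essentially no serious obstacle here: the corollary is a direct consequence of the classification provided by Theorem \ref{thm:m}, combined with the elementary observation that the nonzero Apéry elements of $W_{m,q}$ differ by integers smaller than $m$, hence by gaps.
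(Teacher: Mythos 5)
Your proof is correct, but it takes a genuinely different route from the paper. You first invoke Theorem \ref{thm:m} to pin down $\Gamma=\langle m, qm+1,\ldots, qm+(m-1)\rangle$, and then compute $\mathrm{Ap}(\Gamma,m)=\{0, qm+1,\ldots, qm+(m-1)\}$ explicitly, observing that the pairwise differences of the nonzero Ap\'ery elements lie in $\{1,\ldots,m-2\}$ and are therefore gaps, so all $m-1$ of them are maximal for $\preceq$. The paper instead proves the first assertion abstractly, without the classification: since the type satisfies $t(\Gamma)\leq m-1$, Proposition \ref{prop:type} gives $W_\Gamma(t(\Gamma)+1)\geq 0$, while monotonicity of $k\mapsto W_\Gamma(k)$ gives $W_\Gamma(t(\Gamma)+1)\leq W_\Gamma(m)=0$; the squeeze forces $W_\Gamma(t(\Gamma)+1)=W_\Gamma(m)$ and hence, by strict monotonicity, $t(\Gamma)+1=m$. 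The classification of Theorem \ref{thm:m} is then used only for the ``in particular'' clause. The paper's argument is shorter and shows that the conclusion $t(\Gamma)=m-1$ follows from $W_\Gamma(m)=0$ alone via the Fr\"oberg--Gottlieb--H\"aggkvist bound, whereas your argument leans on the (much harder) classification theorem but in exchange produces the Ap\'ery set and the set of maximal elements explicitly, which is more concrete information. Both are complete and valid.
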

\begin{proof}
	By linearity of the Wilf function and Proposition \ref{prop:type} we have \(0=W_\Gamma(m)\geq W_\Gamma(t(\Gamma)+1)\geq 0.\) Then \(W_\Gamma(m)=W_\Gamma(t(\Gamma)+1)=0\) and \(t(\Gamma)=m-1.\) The second part follows straightforward from Theorem \ref{thm:m}, since \(W_\Gamma(m)=0\) is equivalent to $\Gamma=\langle m, qm+1, \ldots , qm+(m-1)\rangle$.

\end{proof}
The following Figure \ref{Fig:wilf} sketches the behaviour of the Wilf function:

\begin{figure}[H]
\begin{tikzpicture}[line cap=round,line join=round,>=triangle 45,x=0.35cm,y=0.35cm]

\clip(-0.5,-6.) rectangle (15.5,8.);
\draw [line width=1.pt] (0.,-4.)-- (13.,6.);
\draw [->,line width=1.pt] (0.,-5.) -- (0.,7.42);
\draw [->,line width=1.pt] (0.,0.) -- (15.,0.);
\begin{scriptsize}
\draw [fill=uuuuuu] (0.,-4.) circle (2.5pt);
\draw[color=black] (3.0,-3.91) node {$W_\Gamma(0)=-c(\Gamma)$};
\draw [fill=uuuuuu] (13.,6.) circle (2.5pt);
\draw[color=black] (13.1,4.8) node {$W_{\Gamma}(m)$};
\draw[color=black] (8.36,1.2) node {$W_{\Gamma}(k)$};
\draw [fill=uuuuuu] (5.2,0.) circle (2.0pt);
\draw[color=black] (5.,0.71) node {$c/\delta$};
\end{scriptsize}
\end{tikzpicture}
\caption{Sketch of the graph of the function $W_{\Gamma}(k)$.}\label{Fig:wilf}
\end{figure}
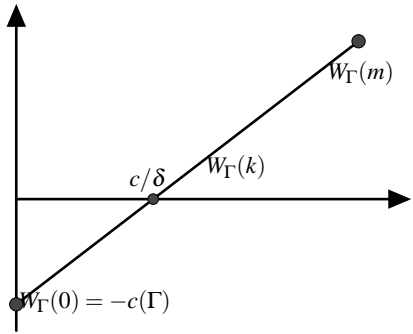

In view of Figure \ref{Fig:wilf}, Theorem \ref{thm:m} together with Proposition \ref{prop:2} suggest that the interesting range through which we must let $k$ run is precisely $2\leq k \leq m$; the case $k=1$ leads to $\Gamma=\mathbb{N}$ and is therefore irrelevant for our purpose. Wilf's question concerns the value $k=e$ that belongs certainly to this range. But in general, $k=e$ is not the minimal value making $W_{\Gamma}(k)$ nonnegative. This means, the Wilf number $W_{\Gamma}(e)$ does not yield in general a sharp bound for the positivity of the Wilf function. From this point of view, it would be certainly interesting to investigate the constant
$$
\mu_{\Gamma}:=\min \{k\in \mathbb{N}: W_{\Gamma}(k)\geq 0\},
$$
where obviously $2\leq \mu_{\Gamma}\leq m$. In other words, as emphasized by Figure \ref{Fig:wilf}, the function $W_{\Gamma}$ considered as a real function has a root between $2$ and $m$, and we propose to find the minimal integer value over the root so that the function is positive. The study of \(\mu_\Gamma\) presents some stimulating challenges. First, let us look at some examples done with the aid of GAP \cite{gapdelgado,gap}.

\begin{ex}\label{ex:fromentineliahou}
Before presenting some computations, we establish the following standard notation: write \(S=\langle x_1\dots,x_s\rangle_r\) for the minimal semigroup that contains \(\{x_1\dots,x_s\}\) and all the integers greater than or equal to \(r\). This notation is widely used e.g. by Delgado in \cite{delgado1}.
\medskip

Consider the numerical semigroups
\begin{align*}
S_1=& \langle 162,1114,1115 \rangle_{9879}& S_2=& \langle 222, 1532, 1533  \rangle_{16647}& S_3=& \langle 172, 327, 328 \rangle_{3437} \\
 S_4=& \langle 88, 100, 102 \rangle_{566} &
S_5=& \langle 88,100,343,345,346,&351,&361,679,680,681,687,693\rangle_{700}
\end{align*}

For each $i=1,\ldots, 5$, we present Table \ref{tab:table1} the embedding dimension $e_i=e (S_i)$, the delta-invariant $\delta_i=\delta (S_i)$, the conductor $c_i=c(S_i)$, the Wilf number $W_i(e_i)=W_{S_i}(e_i)$ as well as $\mu_i=\mu_{S_i}$, the value $W_i(\mu_i)$ and the difference $\Delta_i:=e_i-\mu_i$ for the semigroup $S_i$:

\begin{table}[H]
  \begin{center}
    \begin{tabular}{c|ccccccc} 
      $i$ & $\delta_i$ & $c_i$  & $e_i$ & $\mu_i$ & $\Delta_i$ & $W_i(e_i)$ & $W_i(\mu_i)$   \\
      \hline\hline
      1 & $1109$ & $9879$ & $110$ & \textbf{9} & $101$ & $112111 $ & $102$ \\
      2 & $1935$ & $16647$ & $147$ & \textbf{9} & $138$ & $267798$ & $768$ \\
      3 & $505$ & $3437$ & $97$ & \textbf{7} & $90$ & $45548$ & $98$ \\
      4 & $63$ & $566$ & $63$ & \textbf{9} & $54$ & $3403$ & $1$\\
      5 & $100$ & $700$ & $51$ & \textbf{7} & $44$ & $5100$ & $0$
    \end{tabular}
    \vspace{0.3cm}
      \caption{Invariants of some semigroups.}  \label{tab:table1}
  \end{center}
\end{table}

These examples show how far is in some cases the bound given by the Wilf conjecture (i.e. the bound associated with the embedding dimension) to be sharp. They give also evidences in the direction of the convenience of the constant $\mu_{\Gamma}$ we have just introduced.
\end{ex}

In addition, a companion question to that of the investigation of the constant $\mu_{\Gamma}$ is the following:
\begin{question}\label{ques:mutight}
	To ask for properties of and to characterize those numerical semigroups $\Gamma$ with fixed $\mu_{\Gamma}$. In particular, to characterize those semigroups with \(\mu_\Gamma=e(\Gamma)\).
\end{question}
  The value $\mu_{\Gamma}=2$ is already known: it is equivalent to the case of $\Gamma$ symmetric, which in purely algebraic algebraic means the Gorenstein property. We wonder whether the constant $\mu_{\Gamma}$ encloses some other stimulating feature.
\medskip

To conclude this section, observe that we can use the properties of the type $t(\Gamma)$ of a semigroup $\Gamma$ to give an upper bound for the invariant \(\mu_\Gamma\). 

\begin{proposition}
	Let \(\Gamma\) be a numerical semigroup. Then, \(\mu_\Gamma\leq t(\Gamma)+1.\)
\end{proposition}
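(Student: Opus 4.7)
The plan is to observe that this proposition is essentially an immediate reformulation of Proposition \ref{prop:type}. Recall that $\mu_\Gamma$ is defined as the minimum positive integer $k$ satisfying $W_\Gamma(k) \geq 0$, where $W_\Gamma(k) = k\delta(\Gamma) - c(\Gamma)$, so to establish $\mu_\Gamma \leq t(\Gamma)+1$ it suffices to show that $W_\Gamma(t(\Gamma)+1) \geq 0$.

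First, I would invoke Proposition \ref{prop:type} of Fröberg, Gottlieb and Häggkvist, which gives the inequality $c(\Gamma) \leq \delta(\Gamma) \cdot (t(\Gamma)+1)$. Rearranging this yields
\[
(t(\Gamma)+1)\,\delta(\Gamma) - c(\Gamma) \geq 0,
\]
which is exactly the statement $W_\Gamma(t(\Gamma)+1) \geq 0$. Then, since $\mu_\Gamma$ is the minimum value of $k$ for which $W_\Gamma$ is nonnegative, the existence of the nonnegative value at $k = t(\Gamma)+1$ forces $\mu_\Gamma \leq t(\Gamma)+1$, finishing the argument.

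There is really no obstacle here; the proposition is a direct corollary of Proposition \ref{prop:type} once the Wilf function $W_\Gamma$ is introduced. The only thing worth pointing out explicitly in the write-up is why $t(\Gamma)+1$ lies in the admissible range $[2,m(\Gamma)]$ that was singled out earlier in the section, namely $t(\Gamma)+1 \geq 2$ holds because $t(\Gamma) \geq 1$ for any $\Gamma \neq \mathbb{N}$, and the case $\Gamma = \mathbb{N}$ is trivial since then $\delta(\Gamma) = c(\Gamma) = 0$ and $W_\Gamma \equiv 0$.
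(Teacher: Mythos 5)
Your argument is correct and is essentially identical to the paper's own proof: both simply restate Proposition \ref{prop:type} as $W_\Gamma(t(\Gamma)+1)\geq 0$ and conclude from the definition of $\mu_\Gamma$ as a minimum. The extra remark about the range $[2,m(\Gamma)]$ is harmless but not needed, since the definition of $\mu_\Gamma$ does not require $k$ to lie in that range.
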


\begin{proof}
	Proposition \ref{prop:type} can be stated as \(W_\Gamma(t(\Gamma)+1)\geq 0\) for any numerical semigroup. In particular this means that \(\mu_\Gamma\leq t(\Gamma)+1.\)
\end{proof}




As a general conclusion of this section, we would like to emphasize the importance of the constant \(\mu_\Gamma\). Observe that in most of the previous cases \(\mu_\Gamma\neq e(\Gamma)\). 
\medskip

Moreover, the Wilf function is conceptually very useful: it allows us to guess an explanation of the exceptional numerical semigroups appearing in Fr\"ogoh\"amosa-conjecture, i.e. those with \(W_\Gamma(e)=0\). Observe that the numerical semigroups appearing as candidates for being the only ones with \(W_\Gamma(e)=0\) in Fr\"ogoh\"amosa-conjecture are precisely those for which the Wilf function has a extreme behaviour. 
\medskip

Obviously, since \(W_\Gamma(0)\) and \(W_\Gamma(1)\) are negative for every numerical semigroup \(\Gamma\neq \mathbb{N}\), the first integer where the function can be positive is \(2\); as we have previously shown, \(W_\Gamma(m)\geq 0\) for any numerical semigroup. Therefore, to say that \(2\) and the multiplicity of \(\Gamma\) are extreme values for \(W_{\Gamma}(k)\) means the following: 

\begin{theorem}\label{thm:extreme}
	Let \(\Gamma\) be a numerical semigroup. Then,
	\begin{enumerate}
		\item \(\Gamma=\mathbb{N}\) if and only if \(W_\Gamma(k)\geq 0\) for \(1\leq k\leq m\).
		\item \(\Gamma=\langle m, qm+1, \ldots , qm+(m-1)\rangle\) for \(q\geq 1\) if and only if \(W_{\Gamma}(k)\leq 0\) for all \(1\leq k\leq m\).
		\item \(\Gamma=\langle a,b \rangle\) with \(\gcd(a,b)=1\) is the numerical semigroup with minimal embedding dimension between those satisfying \(W_{\Gamma}(k)\geq 0\) for all \(2\leq k\leq m\) and \(W_{\Gamma}(2)=0\).
	\end{enumerate}
\end{theorem}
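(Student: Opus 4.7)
The plan is to leverage the single structural observation that the Wilf function $W_\Gamma(k)=k\delta(\Gamma)-c(\Gamma)$ is affine in $k$ with slope $\delta(\Gamma)$, where $\delta(\Gamma)\geq 0$ and strict positivity holds unless $\Gamma=\mathbb{N}$. Together with the two boundary inequalities established earlier in the paper, namely $W_\Gamma(2)\leq 0$ (Proposition \ref{prop:2}) and $W_\Gamma(m)\geq 0$ (Theorem \ref{thm:m}), this affine structure will let boundary conditions propagate across the interval $[1,m]$ and pin down the extremal semigroups.

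For (1), I would analyse the case $k=1$: the hypothesis gives $W_\Gamma(1)=\delta(\Gamma)-c(\Gamma)\geq 0$. Since $\delta(\Gamma)$ counts the elements of $\Gamma$ in the integer interval $[0,c(\Gamma)-1]$, which has exactly $c(\Gamma)$ elements, one always has $\delta(\Gamma)\leq c(\Gamma)$, with equality only when this interval is contained in $\Gamma$, i.e.\ when $\Gamma=\mathbb{N}$. The converse is trivial since $c(\mathbb{N})=\delta(\mathbb{N})=0$ makes $W_\mathbb{N}\equiv 0$.

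For (2), I invoke Theorem \ref{thm:m} directly: $W_\Gamma(m)\geq 0$, so the hypothesis $W_\Gamma(k)\leq 0$ on $1\leq k\leq m$ forces $W_\Gamma(m)=0$, and the equality characterization of Theorem \ref{thm:m} then identifies $\Gamma=\langle m,qm+1,\ldots,qm+(m-1)\rangle$ with $q\geq 1$. Conversely, for any such $\Gamma$ Theorem \ref{thm:m} gives $W_\Gamma(m)=0$, whence by affine linearity with nonnegative slope $\delta(\Gamma)$ we get $W_\Gamma(k)\leq W_\Gamma(m)=0$ throughout $1\leq k\leq m$.

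For (3), Proposition \ref{prop:2} tells us that $W_\Gamma(2)=0$ is equivalent to $\Gamma$ being symmetric; moreover, once $W_\Gamma(2)=0$ and $\delta(\Gamma)>0$, the affine-linear propagation yields $W_\Gamma(k)\geq 0$ automatically for every $k\geq 2$, so the set of semigroups satisfying both conditions coincides with the symmetric ones. It remains to show that the minimum of $e(\Gamma)$ over symmetric semigroups is $2$, and is attained precisely by $\Gamma=\langle a,b\rangle$ with $\gcd(a,b)=1$. Any $\Gamma\neq\mathbb{N}$ satisfies $e(\Gamma)\geq 2$, and the classical result that two-generated numerical semigroups are symmetric (cf.\ \cite{RosalesGarciaSanchez}) exhibits the bound as sharp. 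The main obstacle, if any, is really just invoking and citing this classical fact cleanly; conceptually the heavy lifting is already packaged in Theorem \ref{thm:m} and Proposition \ref{prop:2}, and the whole proof amounts to the linear-interpolation argument combining them with the trivial bound $\delta\leq c$.
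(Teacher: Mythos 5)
Your proposal is correct and follows essentially the same route as the paper: part (2) reduces to Theorem \ref{thm:m} plus monotonicity of the affine function $W_\Gamma$, and part (3) reduces via Proposition \ref{prop:2} to identifying the symmetric semigroups of minimal embedding dimension as the two-generated ones. For part (1) you merely spell out the elementary bound $\delta(\Gamma)\leq c(\Gamma)$ with equality only for $\mathbb{N}$, which the paper dismisses as ``clear''; this is a welcome filling-in of detail rather than a different argument.
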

\begin{proof}
	The statement \((1)\) is clear.  The second assertion is equivalent to Theorem \ref{thm:m} and the fact that the Wilf function is strictly increasing. Finally, for the assertion \((3)\) let us assume that \(W_{\Gamma}(k)\geq 0\) for all \(2\leq k\leq m\) and \(W_{\Gamma}(2)=0\). By Proposition \ref{prop:2} the numerical semigroups satisfying those conditions are exactly the family of symmetric numerical semigroups. In particular, the symmetric numerical semigroups with minimal embedding dimension  are those of type \(\Gamma=\langle a,b \rangle\) with \(\gcd(a,b)=1\). 
\end{proof}

In Figure \ref{Fig:wilfemb2} and Figure \ref{Fig:wilffrogo2} we try to illustrate this phenomenon for both semigroups $\Gamma=\langle \alpha, \beta\rangle$ of embedding dimension two and semigroups of the form $
\Gamma=\langle m, qm+1, \ldots , qm+(m-1)\rangle.
$

\begin{multicols}{2}
	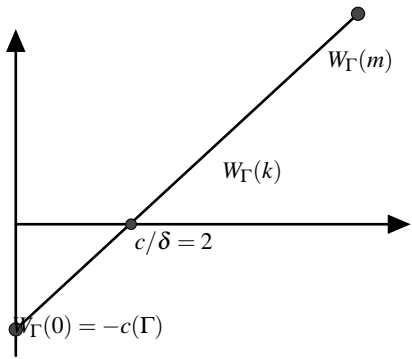
\begin{figure}[H]
		\begin{tikzpicture}[line cap=round,line join=round,>=triangle 45,x=0.35cm,y=0.35cm]
			
			\clip(-0.5,-6.) rectangle (15.5,9.);
			\draw [line width=1.pt] (0.,-4.)-- (13.,8.);
			\draw [->,line width=1.pt] (0.,-5.) -- (0.,7.42);
			\draw [->,line width=1.pt] (0.,0.) -- (15.,0.);
			\begin{scriptsize}
				\draw [fill=uuuuuu] (0.,-4.) circle (2.5pt);
				\draw[color=black] (2.75,-3.91) node {$W_\Gamma(0)=-c(\Gamma)$};
				\draw [fill=uuuuuu] (13.,8) circle (2.5pt);
				\draw[color=black] (13.1,6.2) node {$W_{\Gamma}(m)$};
				\draw[color=black] (9.0,2.0) node {$W_{\Gamma}(k)$};
				\draw [fill=uuuuuu] (4.38,0.) circle (2.0pt);
				\draw[color=black] (6.,-0.71) node {$c/\delta=2$};
			\end{scriptsize}
		\end{tikzpicture}
		\caption{Sketch of the graph of the function $W_{\Gamma}(k)$ with \(e=2\).}\label{Fig:wilfemb2}
	\end{figure}
	\begin{figure}[H]
		\begin{tikzpicture}[line cap=round,line join=round,>=triangle 45,x=0.35cm,y=0.35cm]
			
			\clip(-0.5,-6.) rectangle (15.5,8.);
			\draw [line width=1.pt] (0.,-4.)-- (12.,0);
			\draw [->,line width=1.pt] (0.,-5.) -- (0.,7.42);
			\draw [->,line width=1.pt] (0.,0.) -- (15.,0.);
			\begin{scriptsize}
				\draw [fill=uuuuuu] (0.,-4.) circle (2.5pt);
				\draw[color=black] (5.0,-3.91) node {$W_\Gamma(0)=-c(\Gamma)$};
				\draw [fill=uuuuuu] (12.,0) circle (2.5pt);
				\draw[color=black] (12.1,1) node {$W_{\Gamma}(m)=0$};
				\draw[color=black] (5.5,-1.2) node {$W_{\Gamma}(k)$};
				
			\end{scriptsize}
		\end{tikzpicture}
		\caption{Graph of the function $W_{\Gamma}(k)$ for \(\tiny{\Gamma=\langle m, qm+1, \ldots , qm+(m-1)\rangle}\)}\label{Fig:wilffrogo2}
	\end{figure}
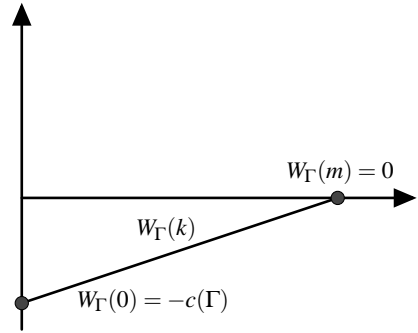
\end{multicols}
 
We finish the section with the observation that the vanishing of the Wilf function imposes a very strong condition on $\Gamma$:

\begin{proposition}\label{prop:dk}
Let $\Gamma$ be a numerical semigroup. If $W_{\Gamma}(k)=0$ for some $k$, then $k\delta \leq (L+1)m$, hence $k\leq m$.
\end{proposition}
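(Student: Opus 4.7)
The plan is to leverage directly the writing $c=Lm+\rho$ with $2\leq\rho\leq m$ (as established in Section~\ref{sec:basics}) together with the partition formula $\delta=\sum_{j=0}^{L}n_j$ provided by Lemma~\ref{lemmaS}.

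First I would observe that the hypothesis $W_{\Gamma}(k)=0$ is just the equation $k\delta=c$. Substituting $c=Lm+\rho$ with $\rho\leq m$, one immediately obtains
\[
k\delta = Lm+\rho \leq Lm+m = (L+1)m,
\]
which gives the first inequality of the statement.

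For the second inequality, the key fact is that every subinterval $I_{\alpha}$ with $0\leq\alpha\leq L$ meets $\Gamma$ nontrivially. More precisely, by Lemma~\ref{lemmaS}(i) we have $n_{\alpha}\geq 1$ for each $\alpha\in\{0,1,\ldots,L\}$, so by part (iii) of the same lemma
\[
\delta = \sum_{j=0}^{L} n_j \;\geq\; L+1.
\]
Combining this lower bound with the upper bound $k\delta\leq (L+1)m$ just obtained, one gets
\[
k \;\leq\; \frac{(L+1)m}{\delta} \;\leq\; \frac{(L+1)m}{L+1} \;=\; m,
\]
which concludes the argument.

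There is no real obstacle here: the whole proof is a two-line computation once one notices that (a) $\rho\leq m$ is built into the definition of $L$ and $\rho$, and (b) each $n_j$ is at least $1$ for $0\leq j\leq L$, so that $\delta$ cannot be smaller than $L+1$. Both ingredients are already recorded in Section~\ref{sec:basics}, so the proof reduces to assembling them.
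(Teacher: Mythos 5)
Your proof is correct and follows essentially the same route as the paper: the first inequality comes from $k\delta=c=Lm+\rho\leq(L+1)m$, and the second from the bound $\delta\geq L+1$ supplied by Lemma \ref{lemmaS}. You merely spell out the first step, which the paper leaves as ``clear by the above reasonings.''
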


\begin{proof}
The first statement is clear by the above reasonings. Moreover, by the statement (iii) in Lemma \ref{lemmaS} we have $\displaystyle \delta = \sum_{j=0}^{L} n_j \geq L+1$, it follows that $(L+1)k\leq \delta k \leq (L+1)m$ and so $k\leq m$. 
\end{proof}
The conclusion $k\leq m$ in Proposition \ref{prop:dk} follows also from the application of the Darboux property to the Wilf function, as it is easily deduced by an inspection of Figure \ref{Fig:wilf}.

\section{Wilf number of a \(\Gamma\)--semimodule}\label{sec:wilfito}

The notion of \emph{Wilf number}  was already introduced by the authors in \cite[Definition 3.1]{almiyano-tetris}. Following the arguments of this paper, let us give the following definition.

\begin{defin}\label{def:wilf number semimodule}
Let $\Delta$ be a $\Gamma$-semimodule, then the Wilf number of $\Delta$ is defined to be
$$
W(\Delta)=e(\Delta)\cdot \delta (\Delta)-c(\Delta).
$$
\end{defin}	
	
Observe that in the particular case $\Delta=\Gamma$, we have \(W(\Gamma)=W_{\Gamma}(e(\Gamma))\) and Wilf's conjecture claims that $W(\Gamma)\geq 0$. 
\medskip

A particular case of a $\Gamma$-semigroup containing $0$ is that minimally generated by $0$ and a gap $g$ of $\Gamma$. Following the notation used in \cite{mu} or \cite{mu2}, if we write $I=[0,g]$ for this minimal system of generators, and $\Delta_I$ for the $\Gamma$-semimodule minimally generated by $I$, then we define the \emph{Wilf number of a gap} $g\in \mathbb{N}\setminus\Gamma$ by assigning the Wilf number of $\Delta_I$, namely
$$
W(g):=W(\Delta_I)=2\delta(\Delta_{I})-c(\Delta_{I}).
$$

In the sequel we will investigate the properties of the Wilf number associated to a gap. It turns out that there are some evidences that might let think of a relation between this function and the Wilf conjecture; this will be explained in Subsect.~\ref{subsec:wilfgap}. On the other hand, in the Wilf number of a $\Gamma$-semimodule $\Delta$ we may replace ---as we did for the semigroup--- the embedding dimension $e(\Delta)$ by an integer $k$, pursuing the idea presented in Sect.~\ref{sec:wilffunction} for the Wilf number of a semigroup. Subsect.~\ref{subsec:wilffunction} is thus devoted to the study of the Wilf function attached to a $\Gamma$-semimodule.

\subsection{Wilf number of a gap}\label{subsec:wilfgap}

As pointed out in a previous paper by the authors \cite{almiyano-tetris}, the Wilf number $W(g)$ associated to a gap $g$ of $\Gamma$ seems to show intrinsic properties of the semigroup itself. Based on this idea, we wonder whether the Wilf number of a gap would help to give an answer to Conjecture \ref{Wilfconjecture}. The manuscript \cite{almiyano-tetris} shows indeed some evidences so that $W(g)$ can take both positive and negative values; but this is bounded as follows:

\begin{theorem}\label{thm:boundwilfgap}
Let $\Gamma$ be a numerical semigroup, and let $g$ be a gap of $\Gamma$, then
$$
\max(W(g))\leq W_{\Gamma}(4).
$$
\end{theorem}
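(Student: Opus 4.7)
The plan is to rewrite both $W(g)$ and $W_{\Gamma}(4)$ in terms of the invariants of $\Gamma$ together with a single auxiliary counting quantity attached to $g$, and then to show the resulting difference is nonnegative term by term.

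Fix a gap $g$ of $\Gamma$ and let $\Delta_{I} = \Gamma \cup (g+\Gamma)$ be the semimodule generated by $\{0,g\}$. I would first introduce
\[
h \;:=\; \bigl|\{\gamma \in \Gamma : g+\gamma \notin \Gamma\}\bigr|,
\]
which, through the injection $\gamma \mapsto g+\gamma$, coincides with the number of gaps of $\Gamma$ that belong to $g+\Gamma$. Hence the gaps of $\Delta_{I}$ are precisely the gaps of $\Gamma$ that do not lie in $g+\Gamma$, so that
\[
c(\Delta_{I}) - \delta(\Delta_{I}) \;=\; \bigl(c(\Gamma) - \delta(\Gamma)\bigr) - h.
\]

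Substituting this identity to eliminate $\delta(\Delta_{I})$ in $W(g) = 2\delta(\Delta_{I}) - c(\Delta_{I})$, and rewriting $W_{\Gamma}(4) = 4\delta(\Gamma) - c(\Gamma)$ as $3c(\Gamma) - 4(c(\Gamma) - \delta(\Gamma))$, a short direct computation yields the key identity
\[
W_{\Gamma}(4) - W(g) \;=\; \bigl(c(\Gamma) - c(\Delta_{I})\bigr) + 2\bigl(\delta(\Gamma) - h\bigr).
\]
The first summand on the right-hand side is nonnegative because $[c(\Gamma),\infty) \subseteq \Gamma \subseteq \Delta_{I}$, which forces $c(\Delta_{I}) \leq c(\Gamma)$.

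For the second summand, I would note that every $\gamma$ contributing to $h$ satisfies $g+\gamma \notin \Gamma$, which forces $g+\gamma < c(\Gamma)$ and therefore $\gamma \in \Gamma \cap [0,\,c(\Gamma)-g-1]$. Since $g \geq 1$, this set is contained in $\Gamma \cap [0,\,c(\Gamma)-1]$, whose cardinality is exactly $\delta(\Gamma)$; hence $h \leq \delta(\Gamma)$. Combining the two nonnegative summands gives $W_{\Gamma}(4) - W(g) \geq 0$ for every gap $g$, as required. The philosophical point---which I expect to be the conceptually delicate step rather than a computational obstacle---is recognizing that the constant $4$ appearing in $W_{\Gamma}(4)$ is precisely what is needed: it absorbs the possible loss $c(\Gamma)-c(\Delta_{I})$ together with twice the bound $h \leq \delta(\Gamma)$, which controls how much $\delta$ can grow when passing from $\Gamma$ to $\Delta_{I}$.
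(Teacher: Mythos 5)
Your proof is correct and follows essentially the same route as the paper: your identity $W_{\Gamma}(4)-W(g)=\bigl(c(\Gamma)-c(\Delta_I)\bigr)+2\bigl(\delta(\Gamma)-h\bigr)$ is an exact rearrangement of the paper's chain of inequalities, since the bound $h\leq\delta(\Gamma)$ is equivalent to the inequality $\delta(\Delta_{I})\leq 2\delta(\Gamma)-(c(\Gamma)-c(\Delta_{I}))$ with which the paper opens its proof. If anything, your version is more complete, because you actually justify that key inequality (via the gap count $g(\Delta_I)=g(\Gamma)-h$ and the injection $\gamma\mapsto g+\gamma$ landing in $\Gamma\cap[0,c(\Gamma)-1]$), whereas the paper asserts it without proof.
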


\begin{proof}
Let $I=[0,g]$ be the minimal system of generators of the $\Gamma$-semimodule $\Delta_I$.
Since $\delta(\Delta_{I})\leq\delta(\Gamma)+\delta(\Gamma)-(c(\Gamma)-c(\Delta_{I}))$, we have
\begin{align*}
W(g) = & 2\delta (\Delta_I)-c(\Delta_I) \\
\leq & 4\delta (\Gamma) - 2(c(\Gamma)-c(\Delta_I)) - c(\Gamma) + (c(\Gamma)-c(\Delta_I)) \\
=& 4\delta (\Gamma) - c(\Gamma) - (c(\Gamma)-c(\Delta_I)).
\end{align*}
As $c(\Gamma)-c(\Delta_{I})\geq 0$, we conclude that
$$
W(g) \leq 4\delta(\Gamma) - c(\Gamma) = W_{\Gamma}(4),
$$
for any gap $g$ of $\Gamma$, which proves the theorem.
\end{proof}

In addition, not only the maximum but the whole range of possible values for the Wilf number of a gap is bounded:

\begin{proposition}\label{prop:rangewilfito}
Let $g$ be a gap of a numerical semigroup $\Gamma$, then
$$
\max(W(g))-\min(W(g))< 2\delta(\Gamma).
$$
Furthermore, if $\Gamma$ is symmetric, then the range $\max(W(g))-\min(W(g))$ is strictly bounded above by the conductor $c(\Gamma)$ of $\Gamma$.
\end{proposition}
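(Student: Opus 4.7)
The plan is to analyse $W(g)$ via two auxiliary parameters attached to the gap $g$. Set
\[
N(g):=|(g+\Gamma)\cap(\mathbb{N}\setminus\Gamma)|,\qquad \beta(g):=c(\Gamma)-c(\Delta_I)\ge 0,
\]
so that $N(g)$ counts the gaps of $\Gamma$ which are ``filled'' when passing from $\Gamma$ to $\Delta_I$, and $\beta(g)$ measures the corresponding reduction of the conductor. A direct count of $\Delta_I\cap[0,c(\Delta_I)-1]$, split into its intersection with $\Gamma$ and with $(g+\Gamma)\setminus\Gamma$, yields $\delta(\Delta_I)=\delta(\Gamma)+N(g)-\beta(g)$, and consequently
\[
W(g)=W_\Gamma(2)+2N(g)-\beta(g).
\]
Thus bounding $\max_gW(g)-\min_gW(g)$ reduces to controlling the range of $g\mapsto 2N(g)-\beta(g)$.

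The upper estimate $W(g)\le W_\Gamma(4)-\beta(g)$ is already implicit in the proof of Theorem \ref{thm:boundwilfgap}; in particular $2N(g)-\beta(g)\le 2\delta(\Gamma)$. The lower estimate is the decisive part and I would base it on two structural observations: (i) $g$ itself lies in $(g+\Gamma)\setminus\Gamma$, so $N(g)\ge 1$; (ii) every gap of $\Gamma$ lying in the interval $[c(\Delta_I),c(\Gamma)-1]$ must be filled by $g+\Gamma$, for otherwise it would survive as a gap of $\Delta_I$ above $c(\Delta_I)-1$, contradicting the very definition of $c(\Delta_I)$. Writing $G(g):=|\Gamma\cap[c(\Delta_I),c(\Gamma)-1]|$, observation (ii) produces $N(g)\ge \beta(g)-G(g)$, and together with (i) one gets
\[
2N(g)-\beta(g)\ge\max\bigl\{2-\beta(g),\ \beta(g)-2G(g)\bigr\}.
\]

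Combining the upper and lower estimates, the strict inequality $\max_gW(g)-\min_gW(g)<2\delta(\Gamma)$ is obtained by a short case analysis on the size of $\beta$ at the minimising gap: when $\beta$ is small the lower bound $2-\beta$ keeps the minimum close to $2$, so the range is at most $2\delta(\Gamma)-2$; when $\beta$ is large, the forced contribution of the filled gaps to $N$ prevents $2N-\beta$ from dropping too far. The hardest point is precisely this balancing: I expect the main obstacle to be the configuration in which one gap $g^+$ almost attains the upper bound $W_\Gamma(4)$ (forcing $\beta(g^+)$ small and $N(g^+)$ near $\delta(\Gamma)$) while another gap $g^-$ nearly attains the lower bound (with $\beta(g^-)$ large); showing that these two regimes cannot both be sharp within the same $\Gamma$ is the essential step, and I would attack it by exploiting the decomposition $N(g)=(\beta(g)-G(g))+\bigl(N(g)-(\beta(g)-G(g))\bigr)$, which separates the forced fillings in $[c(\Delta_I),c(\Gamma)-1]$ from the remaining ones and makes explicit the tension between gap-filling and conductor reduction.

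The second assertion is immediate: by Proposition \ref{prop:2}, $\Gamma$ symmetric means $c(\Gamma)=2\delta(\Gamma)$, so substituting this identity into the general inequality just proved yields $\max_gW(g)-\min_gW(g)<c(\Gamma)$.
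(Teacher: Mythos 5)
Your reduction $W(g)=W_\Gamma(2)+2N(g)-\beta(g)$ is correct, and so are the three individual estimates $N(g)\le\delta(\Gamma)$, $N(g)\ge 1$ and $N(g)\ge\beta(g)-G(g)$. The genuine gap is that the ``short case analysis'' which is supposed to combine them is never carried out, and in fact cannot be carried out with only these ingredients. Writing $g^{+}$ and $g^{-}$ for a maximising and a minimising gap, the pair $(N\le\delta,\ N\ge 1)$ gives $W(g^{+})-W(g^{-})\le 2\delta(\Gamma)-2+\beta(g^{-})-\beta(g^{+})$, while the pair $(N\le\delta,\ N\ge\beta-G)$ gives $W(g^{+})-W(g^{-})\le 2\delta(\Gamma)-\beta(g^{+})-\beta(g^{-})+2G(g^{-})$. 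Since large $\beta$ correlates with small $W$ in your own identity, one expects precisely $\beta(g^{-})$ large and $\beta(g^{+})$ small, which is the unfavourable direction for both bounds; unconditionally they only yield something of the order $3\delta(\Gamma)$ (e.g.\ $\min_g(2N(g)-\beta(g))\ge -G(g^{-})\ge-\delta(\Gamma)$ and $\max_g(2N(g)-\beta(g))\le 2\delta(\Gamma)$), not the claimed $<2\delta(\Gamma)$. You correctly identify this balancing as the essential step, but announcing a decomposition that ``makes explicit the tension'' is a plan of attack, not a proof; the statement is therefore not established by your argument.

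For comparison, the paper's proof consists exactly of your two per-gap inequalities in disguise: $W(g)\le 4\delta(\Gamma)-c(\Gamma)-(c(\Gamma)-c(\Delta_I))$ is your $N(g)\le\delta(\Gamma)$, and $W(g)\ge 2\delta(\Gamma)+2-c(\Gamma)-(c(\Gamma)-c(\Delta_I))$ is your $N(g)\ge 1$; the paper then subtracts them with the two occurrences of $c(\Gamma)-c(\Delta_I)$ cancelling, i.e.\ it treats $\beta$ as common to the maximising and the minimising gap. Your explicit bookkeeping exposes precisely why that cancellation requires justification, so your attempt isolates the right difficulty but does not resolve it: to finish along these lines you must either supply the missing comparison between $\beta(g^{+})$ and $\beta(g^{-})$ or replace the two bounds by ones whose $g$-dependent parts genuinely telescope. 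The final reduction of the symmetric case to $c(\Gamma)=2\delta(\Gamma)$ is fine and agrees with the paper.
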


\begin{proof}
Let $I=[0,g]$ be the minimal system of generators of the $\Gamma$-semimodule $\Delta_I$. We know that
$$
	\max(W(g))\leq 4\delta(\Gamma)-c(\Gamma)-(c(\Gamma)-c(\Delta_{I})),
$$
cf.~proof of Theorem \ref{thm:boundwilfgap}. Moreover
	\[
	\min(W(g))\geq 2\delta(\Gamma)+2-c(\Gamma)-(c(\Gamma)-c(\Delta_{I})),
	\]
hence a straightforward computation shows that
	\[
	\max(W(g))-\min(W(g))\leq 2\delta(\Gamma)-2 < 2\delta(\Gamma).
	\]
If $\Gamma$ is symmetric, then $c(\Gamma)=2\delta (\Gamma)$ (see e.g. \cite[Corollary 4.5]{RosalesGarciaSanchez}), and the second claim follows.
\end{proof}

Proposition \ref{prop:rangewilfito} has a first immediate corollary for the Wilf number of a gap in the case \(\Gamma=\langle \alpha,\beta \rangle.\) For such a semigroup, in our previous work \cite{almiyano-tetris} we proved that \(W(g)\) satisfies certain symmetries of its lattice representation. Moreover, we were able to provide an explicit formula for the Wilf number of a gap, as we will recall in a moment.
\medskip

First of all, the gaps of $\langle \alpha, \beta \rangle$ are also easy to describe: they admit a unique representation $\alpha \beta -a\alpha -b \beta$, where $a\in \ ]0,\beta-1]\cap \mathbb{N}$ and $b\in \ ]0,\alpha-1]\cap \mathbb{N}$. This writing yields a map from the set of gaps of $\langle \alpha, \beta \rangle$ to $\mathbb{N}^2$ given by $\alpha \beta -a\alpha -b \beta \mapsto (a,b)$, which allows us to identify a gap with a point in the lattice $\mathcal{L}=\mathbb{N}^2$; since the gaps are positive numbers, the point lies inside the triangle with vertices $(0,0),(0,\alpha),(\beta, 0)$. From this lattice representation, we were able to provide the following expression for the Wilf number of a gap:

\begin{proposition}\cite[Proposition~3.3]{almiyano-tetris}\label{prop:wilfgap-tetris}
  Let \(g=\alpha\beta-a\alpha-b\beta\) be a gap of $\Gamma$. Then
	\[-W(g)=\Big\{\begin{array}{cc}
	a\alpha-2ab&\text{if}\;\;\;\mathrm{min}\{\Gamma\cap(\Gamma+g)\}=\alpha\beta-b\beta,\\
	b\beta-2ab&\text{if}\;\;\;\mathrm{min}\{\Gamma\cap(\Gamma+g)\}=\alpha\beta-a\alpha.
	\end{array}\]
\end{proposition}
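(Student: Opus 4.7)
My strategy is to compute separately $c(\Delta_I)$ and $g(\Delta_I)$ for the semimodule $\Delta_I = \Gamma \cup (g+\Gamma)$, exploiting that $\Gamma = \langle \alpha,\beta\rangle$ is symmetric (so $c(\Gamma) = 2g(\Gamma) = (\alpha-1)(\beta-1)$) and the lattice encoding of gaps recalled in the excerpt: each gap writes uniquely as $\alpha\beta - a'\alpha - b'\beta$ with $(a',b')\in\{1,\dots,\beta-1\}\times\{1,\dots,\alpha-1\}$. Since $\delta(\Delta_I) = c(\Delta_I) - g(\Delta_I)$, we have $W(g) = c(\Delta_I) - 2g(\Delta_I)$, so the target reduces to showing $c(\Delta_I) - 2g(\Delta_I) = 2ab - a\alpha$ (respectively $2ab - b\beta$) in the two cases.

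First I would establish the counting lemma: the gaps of $\Gamma$ lying in $g + \Gamma$ are exactly those whose lattice coordinates $(a',b')$ satisfy $a'\leq a$ and $b'\leq b$. For this I would write $g'-g = (a-a')\alpha + (b-b')\beta$ and bring it to the canonical form $i\alpha + j\beta$ with $0\leq j\leq \alpha-1$: if $b'\leq b$ this expression is already canonical and lies in $\Gamma$ exactly when $a'\leq a$, while if $b'>b$ the correction $j\mapsto j+\alpha$, $i\mapsto i-\beta$ forces $i<0$ by the ranges of $a$ and $a'$. Consequently $|\{\text{gaps of }\Gamma\}\cap(g+\Gamma)| = ab$ and $g(\Delta_I) = g(\Gamma) - ab$.

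Next I would compute $c(\Delta_I)$, which is the delicate step. The Frobenius lattice point $(1,1)$ of $\Gamma$ always lies inside the box $[1,a]\times[1,b]$, so $F(\Gamma)\in\Delta_I$ and $F(\Delta_I) < F(\Gamma)$; moreover $F(\Delta_I)$ equals the maximum of $\alpha\beta - a'\alpha - b'\beta$ over lattice points $(a',b')$ in the complement of $[1,a]\times[1,b]$, and minimizing $a'\alpha+b'\beta$ on this complement only the two corner candidates $(a+1,1)$ and $(1,b+1)$ compete. The comparison $(a+1)\alpha+\beta\leq \alpha+(b+1)\beta$ simplifies to $a\alpha\leq b\beta$, which in turn matches the case distinction in the statement: a parallel lattice argument shows that $\Gamma\cap(g+\Gamma)$ is generated by the two elements $g+a\alpha = (\alpha-b)\beta$ and $g+b\beta = (\beta-a)\alpha$ (any other sum $g+i\alpha+j\beta\in\Gamma$ exceeds one of these), so $\min\{\Gamma\cap(g+\Gamma)\} = \min\bigl((\alpha-b)\beta,(\beta-a)\alpha\bigr)$, and $(\alpha-b)\beta\leq (\beta-a)\alpha \Longleftrightarrow a\alpha\leq b\beta$.

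Finally I would assemble the pieces. In the case $\min\{\Gamma\cap(g+\Gamma)\} = \alpha\beta - b\beta$, i.e.\ $a\alpha\leq b\beta$, one gets $c(\Delta_I) = (\alpha-1)(\beta-1) - a\alpha$ and hence
\[
W(g) \;=\; c(\Delta_I) - 2g(\Delta_I) \;=\; (\alpha-1)(\beta-1) - a\alpha - 2\Bigl(\tfrac{(\alpha-1)(\beta-1)}{2} - ab\Bigr) \;=\; 2ab - a\alpha,
\]
so $-W(g) = a\alpha - 2ab$. The other case is entirely symmetric under the swap $(\alpha,a)\leftrightarrow(\beta,b)$. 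The main obstacle is the lattice-combinatorial counting and the parallel identification of $F(\Delta_I)$ and $\min\{\Gamma\cap(g+\Gamma)\}$; once these are in hand everything else is bookkeeping using the Sylvester formula $c(\Gamma) = (\alpha-1)(\beta-1) = 2g(\Gamma)$.
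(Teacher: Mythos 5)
Your proposal is correct. Note that the paper itself does not prove this proposition --- it imports it from \cite[Proposition~3.3]{almiyano-tetris} --- but your decomposition reconstructs exactly the two ingredients the paper relies on elsewhere (in the proof of Theorem \ref{thm:final}): the conductor formula $c(\Delta_{[0,g]})=c(\Gamma)-a\alpha$ (resp.\ $-b\beta$) of \cite[Theorem 3.1]{almiyano}, and the count $ab$ of gaps of $\Gamma$ absorbed into $g+\Gamma$, which is \cite[Proposition 3.2]{almiyano-tetris} in the form $\delta(\Delta_{[0,g]})=c(\Delta_{[0,g]})-\delta(\Gamma)+ab$; combined with $c(\Gamma)=2\delta(\Gamma)$ for the symmetric $\Gamma=\langle\alpha,\beta\rangle$, these give the stated formula just as you compute. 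The only point deserving a word of care is the degenerate situation where a corner candidate $(a+1,1)$ or $(1,b+1)$ fails to be a gap; one checks this forces $\Delta_I=\mathbb{N}$ (which happens only for $\alpha=2$, $g=1$), where all your formulas still return the correct values, so no genuine gap arises.
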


For a gap $g=\alpha\beta-a\alpha-b\beta$ we may also write $W(a,b)$ instead of $W(g)$. We observe:

\begin{lemma}\cite[Lemmas 4.1 and 4.2]{almiyano-tetris}\label{lem:sym} Let \(\Gamma=\langle\alpha,\beta\rangle\) be a numerical semigroup with two generators. If \((a,b)\in\mathcal{L}\) is an integral point inside the triangle delimited by the \(y\)--axis, the line \(y=\lfloor\frac{\alpha}{2}\rfloor\) and the diagonal \(\alpha\beta=x\alpha+y\beta\) then
	\[
	-W(a,b)=W(a,\alpha-b).
	\]
	If \((a,b)\in\mathcal{L}\) is an integral point inside the triangle delimited by the \(x\)--axis, the line \(x=\lfloor\frac{\beta}{2}\rfloor\) and the diagonal \(\alpha\beta=x\alpha+y\beta\) then
	\[
	-W(a,b)=W(\beta-a,b).
	\]
\end{lemma}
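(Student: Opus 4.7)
The plan is to derive both symmetries directly from the explicit formula in Proposition \ref{prop:wilfgap-tetris}, using the geometric hypothesis on the triangle to pick out the correct branch of that formula at each of the two points being compared. Recall that Proposition \ref{prop:wilfgap-tetris} has two cases, distinguished by whether $a\alpha<b\beta$ or $a\alpha>b\beta$; consequently the main work is to check that the first triangle forces $a\alpha<b\beta$ for $(a,b)$, and then to analyse the reflected point $(a,\alpha-b)$.

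First I would check that for $(a,b)$ strictly inside the triangle bounded by the $y$-axis, the line $y=\lfloor\alpha/2\rfloor$, and the diagonal, the strict inequality $a\alpha<b\beta$ holds: indeed $b>\alpha/2$ combined with the gap condition $a\alpha+b\beta<\alpha\beta$ yields $a\alpha<\alpha\beta-b\beta<b\beta$. The same inequality $a\alpha<b\beta$ then gives $a\alpha+(\alpha-b)\beta<\alpha\beta$, so $(a,\alpha-b)$ also corresponds to a genuine gap of $\Gamma$. At this point Proposition \ref{prop:wilfgap-tetris} applies to $(a,b)$ in the branch $\min\{\Gamma\cap(\Gamma+g)\}=\alpha\beta-b\beta$, yielding
\[
-W(a,b)=a\alpha-2ab.
\]

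Next I would identify the branch relevant to the reflected point. The gap attached to $(a,\alpha-b)$ is $g'=\alpha\beta-a\alpha-(\alpha-b)\beta=b\beta-a\alpha$, and the two elements of $\Gamma\cap(\Gamma+g')$ obtained by adding the minimal generators are $g'+a\alpha=b\beta$ and $g'+(\alpha-b)\beta=\alpha\beta-a\alpha$; since $(a,b)$ was itself a gap, $b\beta<\alpha\beta-a\alpha$, so the minimum is $b\beta=\alpha\beta-(\alpha-b)\beta$. Proposition \ref{prop:wilfgap-tetris} therefore gives
\[
-W(a,\alpha-b)=a\alpha-2a(\alpha-b)=2ab-a\alpha,
\]
and combining with the previous identity yields $W(a,\alpha-b)=a\alpha-2ab=-W(a,b)$. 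The second symmetry $-W(a,b)=W(\beta-a,b)$ follows by exchanging the roles of $\alpha$ and $\beta$: now the triangle bounded by the $x$-axis, $x=\lfloor\beta/2\rfloor$, and the diagonal forces $b\beta<a\alpha$, so the other branch of Proposition \ref{prop:wilfgap-tetris} comes into play, and an entirely parallel computation on the gap $a\alpha-b\beta$ attached to $(\beta-a,b)$ closes the argument.

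The main obstacle I anticipate is not the algebra but the verification in the first step: showing that ``inside the triangle'' really does guarantee the strict inequality that selects the correct branch of Proposition \ref{prop:wilfgap-tetris}, and simultaneously that the reflected point is still a valid gap point. A minor but genuine subtlety arises at the boundary line $y=\lfloor\alpha/2\rfloor$ when $\alpha$ is odd, where one has to invoke $\gcd(\alpha,\beta)=1$ to rule out the degenerate equality $a\alpha=b\beta$ for a non-zero integer point; this confirms that the natural reading of ``inside'' as ``in the open interior'' loses no information.
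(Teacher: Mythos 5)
The paper does not prove this lemma itself --- it is imported verbatim from \cite[Lemmas 4.1 and 4.2]{almiyano-tetris} --- so there is no internal proof to compare against; judged on its own, your derivation from Proposition \ref{prop:wilfgap-tetris} is correct. The branch selection is right: $b>\alpha/2$ together with $a\alpha+b\beta<\alpha\beta$ gives $a\alpha<(\alpha-b)\beta<b\beta$, hence $\min\{\Gamma\cap(\Gamma+g)\}=\alpha\beta-b\beta$ and $-W(a,b)=a\alpha-2ab$; for the reflected gap $g'=b\beta-a\alpha$ the relevant minimum is again of the form $\alpha\beta-b'\beta$ with $b'=\alpha-b$, giving $-W(a,\alpha-b)=a\alpha-2a(\alpha-b)$, and the two expressions sum to zero as required (and symmetrically for $\mathcal{T}_r$). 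The only step you use without comment is that $\min\{\Gamma\cap(\Gamma+g)\}$ is always the \emph{smaller} of the two elements $\alpha\beta-a\alpha$ and $\alpha\beta-b\beta$; this does follow, since both elements visibly lie in $\Gamma\cap(\Gamma+g)$ and the dichotomy in the statement of Proposition \ref{prop:wilfgap-tetris} asserts that the minimum is one of them, but it is worth saying explicitly (it is really \cite[Theorem 3.1]{almiyano}, the same fact invoked later in the proof of Theorem \ref{thm:final}). Your closing remark about the boundary line is consistent with how the paper uses the triangles: in the proof of Theorem \ref{thm:final} the complement of $\mathcal{T}_u\cup\mathcal{T}_r$ is described by $a\leq\beta/2$ and $b\leq\alpha/2$, so ``inside'' does mean the strict inequalities $b>\alpha/2$ resp.\ $a>\beta/2$, and $\gcd(\alpha,\beta)=1$ indeed excludes $a\alpha=b\beta$.
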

In particular, the set of fixed points of each of the previous symmetries is exactly the set of points with \(W(a,b)=0\). The gaps on those different triangles, their respective symmetric ones and the gaps with Wilf number equal to \(0\) constitute a disjoint partition of the set of gaps. Moreover, those symmetries of the Wilf number allow us to define the concepts of supersymmetric and self-symmetric gaps (see \cite[Definition~4.4]{almiyano-tetris}). Even more, one can see that the whole semigroup \(\Gamma=\langle\alpha,\beta\rangle\) can be recovered from them \cite[Theorem~4.5]{almiyano-tetris}.
\medskip

We are now in the conditions to prove the following consequence of Proposition \ref{prop:rangewilfito}. 
\begin{corollary}\label{cor:boundmin}
	Let \(\Gamma=\langle \alpha,\beta \rangle\) be a numerical semigroup and let \(g\in\mathbb{N}\setminus\Gamma\) be a gap. Then
	\[\max(W(g))=-\min(W(g))<\delta(\Gamma).\]
\end{corollary}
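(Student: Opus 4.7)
The plan is to derive the corollary by combining the general range bound from Proposition \ref{prop:rangewilfito} with the symmetries of $W$ provided by Lemma \ref{lem:sym}. Since $\Gamma=\langle\alpha,\beta\rangle$ with $\gcd(\alpha,\beta)=1$ is symmetric (a classical fact: $c(\Gamma)=(\alpha-1)(\beta-1)$ and the genus equals $(\alpha-1)(\beta-1)/2$), we have $c(\Gamma)=2\delta(\Gamma)$. In particular, Proposition \ref{prop:rangewilfito}, whose proof actually yields $\max(W(g))-\min(W(g))\leq 2\delta(\Gamma)-2$, applies and gives
\[
\max(W(g))-\min(W(g))<2\delta(\Gamma).
\]

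The second ingredient is to show that $\max(W(g))=-\min(W(g))$, i.e., that the set $\{W(g):g\in\mathbb{N}\setminus\Gamma\}$ is symmetric about $0$. For this I would use Lemma \ref{lem:sym}. Recall that each gap $g=\alpha\beta-a\alpha-b\beta$ corresponds bijectively to the lattice point $(a,b)$ in the open triangle $T$ delimited by the axes and the diagonal $\alpha\beta=x\alpha+y\beta$. The horizontal midline $y=\lfloor\alpha/2\rfloor$ splits $T$ into two regions, and Lemma \ref{lem:sym} shows that the involution $(a,b)\mapsto (a,\alpha-b)$ maps the lower region bijectively onto its complement in the vertical strip of $T$ and reverses the sign of $W$. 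The analogous statement holds for the horizontal strip with the involution $(a,b)\mapsto(\beta-a,b)$. Together, these two involutions pair every gap $g$ with a gap $g'$ such that $W(g')=-W(g)$; the only unpaired gaps are the fixed points, which by the same lemma satisfy $W(g)=0$. Consequently the multiset $\{W(g)\}$ is invariant under $W\mapsto -W$, hence $\max(W(g))=-\min(W(g))$.

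Combining both steps, $2\max(W(g))=\max(W(g))-\min(W(g))<2\delta(\Gamma)$, which yields $\max(W(g))<\delta(\Gamma)$ and concludes the proof.

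The main obstacle is the bookkeeping in the second step: one must verify that every lattice point of $T$ either lies in the domain of one of the two involutions of Lemma \ref{lem:sym}, lies in the image of such a domain, or is a fixed point. Because the two involutions act along orthogonal directions and cover complementary halves of $T$, this amounts to a direct geometric check, but it is the only nontrivial part of the argument; once the pairing is established, the inequality follows immediately from Proposition \ref{prop:rangewilfito} by halving.
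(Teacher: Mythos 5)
Your proposal is correct and follows essentially the same route as the paper: Lemma \ref{lem:sym} (together with the partition of the gaps into the two triangles, their symmetric images, and the zero-set) gives $\max(W(g))=-\min(W(g))$, and Proposition \ref{prop:rangewilfito} then yields $2\max(W(g))<2\delta(\Gamma)$. The only difference is cosmetic: your appeal to the symmetry of $\Gamma=\langle\alpha,\beta\rangle$ and the identity $c(\Gamma)=2\delta(\Gamma)$ is not needed, since the bound $\max(W(g))-\min(W(g))<2\delta(\Gamma)$ in Proposition \ref{prop:rangewilfito} already holds for an arbitrary numerical semigroup.
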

\begin{proof}
	Since the Wilf number of a gap satisfies the symmetries of Lemma \ref{lem:sym}, then it is obvious that \(\max(W(g))=-\min(W(g))\). Therefore, Proposition \ref{prop:rangewilfito} reads
	$$
	\max(W(g))-\min(W(g))=2\max(W(g))< 2\delta(\Gamma).
	$$
\end{proof}

Unfortunately, there is no analogous formula to the one of Proposition \ref{prop:wilfgap-tetris} for a semigroup \(\Gamma\) with any number of generators. However, given a semigroup \(\Gamma\) with any number of generators, one can try to use the properties of the Wilf number of any gap to deduce as much as possible information leading to the solution of the Wilf conjecture. To do so, recall that \(W_\Gamma(k)\) is an increasing function of \(k\). Therefore, the following corollary is immediate:
\begin{corollary}
	Let \(\Gamma\) be a numerical semigroup with \(e(\Gamma)\geq 4\). If there exists \(g\in\mathbb{N}\setminus\Gamma\) such that \(W(g)\geq 0\), then \(W_\Gamma(k)\geq 0\) for any \(k\geq 4\).
\medskip	
	In particular, \(\Gamma\) satisfies Wilf conjecture \ref{Wilfconjecture}.
\end{corollary}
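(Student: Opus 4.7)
The plan is a short chain of two invocations of results already established in Section \ref{sec:wilfito} together with the monotonicity of the Wilf function in its argument $k$.

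First, I would observe that Theorem \ref{thm:boundwilfgap} gives the bound $W(g) \leq W_\Gamma(4)$ for every gap $g$ of $\Gamma$. Thus, if the hypothesis is satisfied, that is, if there exists some gap $g \in \mathbb{N}\setminus\Gamma$ with $W(g) \geq 0$, then we get immediately
\[
0 \leq W(g) \leq W_\Gamma(4),
\]
so $W_\Gamma(4) = 4\delta(\Gamma) - c(\Gamma) \geq 0$.

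Second, I would exploit the fact that the Wilf function $W_\Gamma(k) = k\delta(\Gamma) - c(\Gamma)$ is strictly increasing in $k$ (since $\delta(\Gamma) \geq 1$ whenever $\Gamma \neq \mathbb{N}$, and the case $\Gamma = \mathbb{N}$ is trivial). Consequently $W_\Gamma(k) \geq W_\Gamma(4) \geq 0$ for every $k \geq 4$, which is the first assertion of the corollary.

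Finally, for the second assertion, I would just specialize to $k = e(\Gamma)$: the hypothesis $e(\Gamma) \geq 4$ allows us to conclude $W_\Gamma(e(\Gamma)) \geq 0$, which is precisely the statement of Wilf's conjecture \ref{Wilfconjecture} for $\Gamma$. There is no genuine obstacle here: the corollary is a direct packaging of Theorem \ref{thm:boundwilfgap} with the monotonicity of $W_\Gamma$ observed throughout Section \ref{sec:wilffunction}; the only thing to watch is that the assumption $e(\Gamma) \geq 4$ is used exactly to ensure that the threshold $k = 4$ from Theorem \ref{thm:boundwilfgap} does not exceed the embedding dimension at which Wilf's inequality is evaluated.
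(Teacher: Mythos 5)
Your proposal is correct and follows exactly the same route as the paper: apply Theorem \ref{thm:boundwilfgap} to get $0\leq W(g)\leq W_{\Gamma}(4)$, then invoke the monotonicity of $W_{\Gamma}$ in $k$ and specialize to $k=e(\Gamma)\geq 4$ for Wilf's conjecture. Nothing is missing.
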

\begin{proof}
If there exists a gap \(g\in\mathbb{N}\setminus\Gamma\) such that \(W(g)\geq 0\), then \(\max(W(g))\geq 0\). Therefore, by Theorem \ref{thm:boundwilfgap} we have that \(0\leq \max(W(g))\leq W_{\Gamma}(4)\). Thus, \(W_\Gamma(k)\geq 0\) for any \(k\geq 4\) since the Wilf function is increasing in \(k\).
\end{proof}

However, the existence of a positive value of the Wilf number of a gap is not guaranteed. There are semigroups for which $W(g)<0$ for all the gaps: to find numerical semigroups with \(W(g)<0\) for all the gaps it is enough, by Theorem \ref{thm:boundwilfgap}, to find a numerical semigroup \(\Gamma\) with \(\mu_{\Gamma}>4\). Then, all the semigroups of Example \ref{ex:fromentineliahou} provide examples of numerical semigroups with \(W(g)<0\) for all the gaps. This motivates the following
\begin{question}
	Is there always a gap \(g\) with \(W(g)\geq 0\) for any numerical semigroup with \(\mu_\Gamma\leq 4\)?
\end{question}
\medskip

Despite the inconvenience of having semigroups for which $W(g)<0$ for all the gaps , numerical examples lead us to propose the following conjecture for a lower bound of the Wilf number of a gap:

\begin{conj}[Bound conjecture]\label{conj:bound}
For any semigroup \(\Gamma\), the inequality
$$\min(W(g))\geq -W_{\Gamma}(e(\Gamma))$$
holds.
\end{conj}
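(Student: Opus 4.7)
The plan starts with a direct algebraic reduction that turns the pointwise bound into a verifiable inequality. Setting $N(g) := |\{x \in \Gamma : g + x \notin \Gamma\}|$, the decomposition $\Delta_I = \Gamma \cup (g+\Gamma)$ shows every gap of $\Delta_I$ is a gap of $\Gamma$, and a gap $h$ of $\Gamma$ lies outside $\Delta_I$ precisely when $h \notin g + \Gamma$. Hence $g(\Delta_I) = g(\Gamma) - N(g)$, and combined with $c(\Delta_I) = \delta(\Delta_I) + g(\Delta_I)$ this yields the identity
\[
W(g) \;=\; W_{\Gamma}(2) \;+\; 2N(g) \;-\; \bigl(c(\Gamma) - c(\Delta_I)\bigr).
\]
Consequently, the stated bound $W(g) \geq -W_{\Gamma}(e(\Gamma))$ for \emph{every} gap $g$ is equivalent to the pointwise inequality
\[
(e+2)\,\delta(\Gamma) \;+\; 2\,N(g) \;+\; c(\Delta_I) \;\geq\; 3\,c(\Gamma) \qquad (\ast)
\]
to be proved for every $g \in \mathbb{N}\setminus\Gamma$.

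Next, I would interpret $(\ast)$ through the Apéry/interval machinery of Section~\ref{sec:basics}. Because $g + \Gamma$ is confined to the residue class $\overline{g} \pmod{m}$, both the drop $c(\Gamma) - c(\Delta_I)$ and the count $N(g)$ are determined by the column of the partition $\{I_{\alpha}\}$ indexed by $\overline{g}$, and Lemma~\ref{lemma:21} locates the relevant Apéry element $w_{\overline g}$. A key geometric bound I would try to establish is $c(\Gamma) - c(\Delta_I) \leq m \cdot N(g)$: each row-descent in column $\overline{g}$ removes exactly one top gap from that residue class, and such a gap has the form $g + x$ with $x \in \Gamma$, so it contributes to $N(g)$. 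Combined with Proposition~\ref{prop:tecnica} applied at $k = e+2$ (which turns the target $(e+2)\delta(\Gamma) \geq 3c(\Gamma) - 2N(g) - c(\Delta_I)$ into a column-wise condition on the $n_{\alpha}$'s) this should reduce $(\ast)$ to an explicit estimate along the column of $\overline{g}$.

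The extremal instance is the gap $g^{\ast}$ realising $\min_{g} W(g)$; by the identity above, $g^{\ast}$ minimises the quantity $2N(g) - (c(\Gamma) - c(\Delta_I))$, so it is expected to lie at (or just below) a maximal Apéry representative, where $N(g)$ is as small as possible. As warm-ups I would first verify $(\ast)$ in the two-generator case $\Gamma = \langle \alpha, \beta \rangle$ using the explicit formula of Proposition~\ref{prop:wilfgap-tetris} and the symmetries of Lemma~\ref{lem:sym}, and then in the maximal embedding dimension case $e = m$, where Theorem~\ref{thm:m} supplies ample slack $W_{\Gamma}(e) \geq 0$. In these two families, the column-wise bound above should be enough, and the corresponding calculations give models for the general argument.

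The principal obstacle is that the theorem preceding this conjecture in the paper asserts Bound~Conjecture~$\Rightarrow$~Wilf~Conjecture, so any complete proof of $(\ast)$ is at least as hard as the Wilf conjecture itself. In particular, no single-column argument can suffice: the inequality must exploit joint control across all $m-1$ non-trivial residues. The genuinely new input needed is a quantitative interaction between $\min\mathrm{Ap}(\Gamma, m)$ and $\max\mathrm{Ap}(\Gamma, m)$ going beyond Proposition~\ref{prop:type}'s $c(\Gamma) \leq (t(\Gamma)+1)\delta(\Gamma)$, capturing how ``tall'' residue classes (those with large $\lfloor w_j/m\rfloor$) force enough gaps in $g + \Gamma$ to keep $N(g)$ large uniformly in $g$. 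I expect the hardest regime to be semigroups with $t(\Gamma)+1 > e(\Gamma)$ and $\overline{g}$ aligned with a tall maximal Apéry column, which is essentially the unresolved case of Wilf's conjecture itself.
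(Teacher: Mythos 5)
The statement you are addressing is a \emph{conjecture}: the paper states it without proof (it only proves, in Theorem \ref{thm:wilfitowins}, that it would imply Wilf's conjecture), so there is no argument of the authors' to measure yours against. Your opening reduction is correct and easy to verify: since $\Delta_I=\Gamma\cup(g+\Gamma)$ one has $g(\Delta_I)=g(\Gamma)-N(g)$ with $N(g)=|\{x\in\Gamma : g+x\notin\Gamma\}|$, and combining $c(\Delta_I)=\delta(\Delta_I)+g(\Delta_I)$ with $c(\Gamma)=\delta(\Gamma)+g(\Gamma)$ gives exactly $W(g)=W_{\Gamma}(2)+2N(g)-\bigl(c(\Gamma)-c(\Delta_I)\bigr)$; your inequality $(\ast)$ is then an equivalent restatement of the conjecture. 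This is a clean normal form, consistent with the explicit two-generator formulas of Proposition \ref{prop:wilfgap-tetris}.

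However, what you have written is not a proof, and you acknowledge as much. The two load-bearing steps are missing: (i) the ``key geometric bound'' $c(\Gamma)-c(\Delta_I)\leq m\cdot N(g)$ is asserted, not established, and even granted it controls only one of the quantities entering $(\ast)$, within a single residue class; (ii) you concede explicitly that no single-column argument can suffice and that the remaining difficulty is at least as hard as Wilf's conjecture itself, since by Theorem \ref{thm:wilfitowins} the Bound conjecture implies it. The ``genuinely new input'' you call for --- a quantitative interaction between $\mathrm{min}\,\mathrm{Ap}(\Gamma,m)$ and $\mathrm{max}\,\mathrm{Ap}(\Gamma,m)$ uniform over all residues --- is precisely the content that is absent. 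So the proposal amounts to a correct equivalent reformulation plus a research programme; as a proof attempt it has a gap coextensive with the conjecture itself. The reformulation $(\ast)$ and the identity for $W(g)$ in terms of $N(g)$ are nonetheless worth keeping, as they make the extremal gap $g^{\ast}$ (the one minimising $2N(g)-(c(\Gamma)-c(\Delta_I))$) a concrete object to study.
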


Here it is important to notice that a positive answer to Conjecture \ref{conj:bound} would solve the Wilf conjecture:

\begin{theorem}\label{thm:wilfitowins}
The Bound conjecture \ref{conj:bound} implies the Wilf conjecture.
\end{theorem}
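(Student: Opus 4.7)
The plan is to exhibit a single gap $g_0$ of $\Gamma$ whose Wilf number $W(g_0)$ is nonpositive; the Bound conjecture applied at $g_0$ then forces
\[
W_\Gamma(e(\Gamma)) \;\geq\; -W(g_0) \;\geq\; 0,
\]
which is precisely the Wilf inequality. The natural candidate, and the one I would use, is $g_0 := F(\Gamma)$, so the whole argument reduces to showing $W(F) \leq 0$ for every $\Gamma \neq \mathbb{N}$.

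To compute $W(F)$, I would set $I = [0, F]$ and observe that $\Delta_I = \Gamma \cup (F + \Gamma)$. Since every element of $F + \Gamma$ lies in $[F, \infty)$, the largest gap of $\Delta_I$ equals the second largest gap $F_2$ of $\Gamma$. All integers in the open interval $(F_2, F)$ already belong to $\Gamma$ (otherwise they would be gaps strictly between $F_2$ and $F$), so a direct count yields $c(\Delta_I) = F_2 + 1$ and $\delta(\Delta_I) = \delta(\Gamma) - (F - F_2 - 1)$. Substituting into the definition of the Wilf number leads to the closed formula
\[
W(F) \;=\; W_\Gamma(2) + 2 - (F - F_2).
\]

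The nonpositivity of $W(F)$ now follows from a short case split. If $\Gamma$ is symmetric, Proposition~\ref{prop:2} gives $W_\Gamma(2) = 0$, and the characterization $z \in \Gamma \Leftrightarrow F - z \notin \Gamma$ applied to $z = 1$ yields $F - 1 \in \Gamma$, hence $F - F_2 \geq 2$; if $\Gamma$ is not symmetric, then $W_\Gamma(2) \leq -1$ and the trivial bound $F - F_2 \geq 1$ is already enough. The excluded case $g(\Gamma) = 1$ corresponds to $\Gamma = \langle 2, 3 \rangle$, where $\Delta_I = \mathbb{N}$ and $W(F) = 0$ outright.

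Combining everything, Conjecture~\ref{conj:bound} applied at $g_0 = F$ gives $W(F) \geq -W_\Gamma(e(\Gamma))$, and together with $W(F) \leq 0$ this yields $W_\Gamma(e(\Gamma)) \geq 0$. I expect the main (but mild) obstacle to be the bookkeeping in the computation of $c(\Delta_I)$ and $\delta(\Delta_I)$: one must verify that the elements lost in passing from the $\delta$-range of $\Gamma$ to that of $\Delta_I$ are exactly the integers in $(F_2, F)$, all of which automatically lie in $\Gamma$. Everything else is substitution together with the short case analysis above.
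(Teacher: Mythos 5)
Your proof is correct, but it takes a genuinely different route from the paper's. The paper assumes $e(\Gamma)\geq 4$ (invoking the known cases $e(\Gamma)\leq 3$ of the Wilf conjecture), applies Theorem \ref{thm:boundwilfgap} together with the monotonicity of $k\mapsto W_\Gamma(k)$ to get $\max(W(g))\leq W_\Gamma(4)\leq W_\Gamma(e(\Gamma))$, and then sandwiches $-W_\Gamma(e(\Gamma))\leq \min(W(g))\leq\max(W(g))\leq W_\Gamma(e(\Gamma))$ to conclude $W_\Gamma(e(\Gamma))\geq 0$. You instead exhibit one explicit gap with nonpositive Wilf number, namely $g_0=F(\Gamma)$: since $\Delta_{[0,F]}=\Gamma\cup\{F\}$, your bookkeeping $c(\Delta_I)=F_2+1$ and $\delta(\Delta_I)=\delta(\Gamma)-(F-F_2-1)$ is right, the identity $W(F)=W_\Gamma(2)+2-(F-F_2)$ checks out (e.g.\ for $\langle 3,5\rangle$ it gives $W(7)=-1$, matching Proposition \ref{prop:wilfgap-tetris}), and the case split via Proposition \ref{prop:2} (symmetric: $W_\Gamma(2)=0$ and $F-1\in\Gamma$; non-symmetric: $W_\Gamma(2)\leq -1$) does yield $W(F)\leq 0$, with $\langle 2,3\rangle$ handled separately. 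Your route is more self-contained: it avoids both Theorem \ref{thm:boundwilfgap} and the appeal to the known low-embedding-dimension cases of Wilf, and it shows that the Bound conjecture is only needed at the single gap $g=F(\Gamma)$ to imply Wilf. The trade-off is that your argument uses the ``for all gaps'' ($\min$) formulation of Conjecture \ref{conj:bound} in an essential way, whereas the paper's sandwich argument would survive even under the weaker existential form stated in the introduction; you should flag which version you are invoking.
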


\begin{proof}
Since the Wilf conjecture is known to be true for semigroups with \(e(\Gamma)<4\) \cite{dm}, let us assume \(e(\Gamma)\geq 4\). By Theorem \(\ref{thm:boundwilfgap}\) we have \(\max(W(g))\leq W_{\Gamma}(4)\leq W_{\Gamma}(e(\Gamma))\), since \(W_\Gamma(k)\) is increasing in \(k\). Therefore, if the Bound conjecture \ref{conj:bound} is true, then
\[W_{\Gamma}(e)\geq\max(W(g))\geq\min(W(g))\geq -W_{\Gamma}(e(\Gamma)).\]
Hence \(|W(g)|\leq W_{\Gamma}(e(\Gamma))\) which implies \(W_{\Gamma}(e(\Gamma))\geq 0\).
\end{proof}

\begin{rem}
	Moreover, given a semigroup $\Gamma$ with embedding dimension $e$, for the proof of the Bound conjecture \ref{conj:bound} it would be enough to prove the existence of a gap $g$ of $\Gamma$ and of an integer \(k\) such that $k\leq e-2$ and \(W(g)\geq -W_{\Gamma}(k)\). In such a case, one would have
	$$
	\min(W(g))\geq \max(W(g))-2\delta(\Gamma)\geq-W_{\Gamma}(k)-2\delta(\Gamma)\geq -W_{\Gamma}(e),
	$$
	which implies the claim.
\end{rem}

In view of Theorem \ref{thm:wilfitowins}, it might be useful to understand the behaviour of the Wilf function for an arbitrary $\Gamma$-semimodule. This study is precisely the content of the next subsection.

\subsection{Wilf function of a $\Gamma$-semimodule with an arbitrary number of minimal generators}\label{subsec:wilffunction}

At this point a natural definition comes up to our mind. Let $\Delta$ be a $\Gamma$-semimodule  of embedding dimension $e(\Delta)$. We define the \emph{Wilf function of a $\Gamma$-semimodule} as the function $W_{\Delta}:\mathbb{N}\to \mathbb{Z}$ given by
$$
W_{\Delta}(k)=k\delta(\Delta)-c(\Delta).
$$
Observe that this is the natural generalization of the Wilf function of a semigroup. From this point of view, a generalization of the Wilf conjecture would seem to be natural. However, for $k=e(\Delta)$ this function admits in general a chaotic behaviour. Look at the following example.

\begin{ex}
For $\Gamma=\langle 6,8,35 \rangle$, let $\Delta_{[0,g]}$ be the $\Gamma$-semimodule minimally generated by $[0,g]$ with $g$ a gap of $\Gamma$. Table \ref{tab:table2} presents the values taken by the Wilf function $W_{\Delta_I}(2)$:
\begin{table}[h!]
  \begin{center}
    \begin{tabular}{cc||cc||cc} 
      $g$ &$W_{\Delta_I}(2)$ &   $g$ &$W_{\Delta_I}(2)$ &   $g$ &$W_{\Delta_I}(2)$   \\
      \hline\hline
      1 & $0$ & 11 & $0$ & 27 & $0$ \\
      2 & $-2$ & 13 & $0$ & 29 & $0$   \\
      3 & $0$ & 15 & $0$ & 31 & $4$   \\
      4 & $0$ & 17 & $0$ & 33 & $2$  \\
      5 & $0$& 19 & $0$ & 37 & $2$ \\
       7 & $0$& 21 & $0$ & 39 & $4$  \\
      9 & $0$& 23 & $0$ & 45 & $4$  \\
      10 & $2$& 25 & $4$ &  &   \\
    \end{tabular}
    \vspace{0.3cm}
      \caption{Gaps and their Wilf numbers for $\Gamma=\langle 6,8,35 \rangle$.}  \label{tab:table2}
  \end{center}
\end{table}
\end{ex}

Our previous work \cite{almiyano-tetris} shows that, even for a semigroup \(\Gamma=\langle \alpha, \beta \rangle\), the Wilf function of \(\Delta=\Delta_{[0,g]}\) at the embedding dimension can take both positive and negatives values. Therefore, the behaviour of the Wilf function at \(k=\mathrm{e}(\Delta)\) is bad even for the \textquotedblleft easy case\textquotedblright~ \(\mathrm{e}(\Delta)=2\). Taking this into account, we have several questions:

\begin{question}\label{extensionwilfsemimodules}
	Let \(\Gamma\) be a numerical semigroup and \(\Delta\) be a \(\Gamma\)--semimodule.
	\begin{itemize}
		\item [(1)] Find a  characterization of $$
		\mu_{\Gamma,\Delta}:=\min \{k\in \mathbb{N}: W_{\Delta}(k)\geq 0\;\text{for all $\Gamma$-semimodules }  \Delta\}.
		$$
		\item [(2)] Is \(\mu_{\Gamma,\Delta}\) related to any invariant of \(\Gamma\)?
		\item [(3)] For those \(\Gamma\)-semimodules $\Delta$ with \(\mathrm{e}(\Delta)\) minimal generators, characterize
		$$
		\mu_{\Delta,\mathrm{e}(\Delta)}:=\min \{k\in \mathbb{N}: W_{\Delta}(k)\geq 0\;\text{for all } \Delta\;\text{with \(\mathrm{e}(\Delta)\) minimal generators}\}.
		$$
		\item [(4)] Can \(\mu_{\Delta,\mathrm{e}(\Delta)}\) be computed from \(\mu_{\Gamma,\Delta}\)? 
		\item [(5)] For a fixed \(\Gamma\), describe those \(\Gamma\)--semimodules \(\Delta\) such that \(\mu_{\Delta,\mathrm{e}(\Delta)}\leq\mathrm{e}(\Delta)\).
		\item [(6)] Find a characterization of those numerical semigroups \(\Gamma\) such that for any \(\Gamma\)--semimodule \(\Delta\) one has \(\mu_{\Delta,\mathrm{ed}(\Delta)}\leq\mathrm{e}(\Delta)\), i.e. \(W_{\Delta}(\mathrm{e}(\Delta))\geq 0\).
	\end{itemize}
\end{question}

These questions arise naturally after the whole discussion of this paper. Our manuscript \cite{almiyano-tetris} allows us to work with an explicit expression (see Proposition \ref{prop:wilfgap-tetris}) of the Wilf number of a gap in the case \(\Gamma=\langle \alpha,\beta \rangle\). Thus, we are able to provide a partial solution to Question \ref{extensionwilfsemimodules}~(3) in the case \(\Gamma=\langle \alpha,\beta \rangle\). So let us compute a bound for $\mu_{\Delta,\mathrm{ed}(\Delta)}$ in Question \ref{extensionwilfsemimodules}~(3) in the case of a \(\Gamma\)--semimodule \(\Delta\) with \(\mathrm{ed}(\Delta)=2\). 

\begin{theorem}\label{thm:final}
	Let \(\Gamma=\langle\alpha,\beta\rangle\) be a numerical semigroup with two generators. Let \(g\in\mathbb{N}\setminus\Gamma\) be a gap of the semigroup. Let \(\Delta=[0,g]\) be the \(\Gamma\)--semimodule associated to \(g\). Then, 
	\[W_{\Delta}(3)\geq 0.\]
	In particular, \(\mu_{\Delta,2}= 3\).
\end{theorem}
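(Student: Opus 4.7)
The plan is to translate $W_\Delta(3)\geq 0$ into an arithmetic inequality in the lattice coordinates of the gap $g=\alpha\beta-a\alpha-b\beta$, and to verify that inequality by a short case analysis which uses the hypothesis $\gcd(\alpha,\beta)=1$ in an essential way. I would start from
\[
W_\Delta(3)=\delta(\Delta)+W_\Delta(2)=\delta(\Delta)+W(g),
\]
which makes the case $W(g)\geq 0$ immediate since $\delta(\Delta)\geq 0$. Assuming $W(g)<0$, set $\mu:=\min(a\alpha,b\beta)$; by Proposition \ref{prop:wilfgap-tetris} one has $W(g)=2ab-\mu$, so the assumption amounts to $\mu>2ab$. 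The quantity $\mu$ realizes the minimal syzygy of $\Delta$, so $c(\Delta)=c(\Gamma)-\mu$, while a direct count in the lattice representation (gaps of $\Gamma$ in $g+\Gamma$ correspond bijectively to pairs $(a',b')$ with $1\leq a'\leq a$, $1\leq b'\leq b$) shows that exactly $ab$ gaps of $\Gamma$ lie in $g+\Gamma$, giving $g(\Delta)=\delta(\Gamma)-ab$ and
\[
\delta(\Delta)=\delta(\Gamma)+ab-\mu,\qquad W_\Delta(3)=\delta(\Gamma)+3ab-2\mu.
\]
The claim then reduces to $(\alpha-1)(\beta-1)+6ab\geq 4\mu$.

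By the symmetry swapping the roles of $(a,\alpha)$ and $(b,\beta)$ I may assume $\mu=a\alpha$. Since $\gcd(\alpha,\beta)=1$ and $1\leq b<\alpha$, the equality $a\alpha=b\beta$ is impossible, so in fact $a\alpha\leq b\beta-1$; moreover $\mu>2ab$ forces $\alpha>2b$, whence $2\alpha-3b\geq b+2>0$. Substituting the strict bound $a\leq(b\beta-1)/\alpha$ into $(\alpha-1)(\beta-1)\geq 2a(2\alpha-3b)$ reduces the problem to
\[
\alpha(\alpha-1)(\beta-1)\geq 2(b\beta-1)(2\alpha-3b),
\]
which, viewed as linear in $\beta$, is equivalent to $\beta\,P\geq Q$, where
\[
P(\alpha,b):=\alpha^2-(4b+1)\alpha+6b^2,\qquad Q(\alpha,b):=\alpha^2-5\alpha+6b.
\]

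The hard part will be the verification of $\beta P\geq Q$, which I would handle via the two factorizations
\[
(\alpha+1)P-bQ=\alpha(\alpha-2b-1)(\alpha-3b+1),\qquad 3P-Q=2(\alpha-3b)(\alpha-3b+1),
\]
both obtained by direct computation (the relevant discriminants reduce to perfect squares). When $\alpha\geq 3b-1$ the first identity yields $(\alpha+1)P\geq bQ$, and the bound $b\beta\geq a\alpha+1\geq\alpha+1$ (from $a\geq 1$) then gives $\beta\geq(\alpha+1)/b\geq Q/P$, hence $\beta P\geq Q$. The delicate range is $2b+1\leq\alpha\leq 3b-2$, which is nonempty only for $b\geq 3$; here the second identity gives $3P-Q>0$, so $Q/P<3$, while $\beta\geq\lceil(\alpha+1)/b\rceil=3$ in this window forces $\beta P\geq 3P>Q$. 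The main subtlety lies precisely in this small regime, where the real bound $(\alpha+1)/b$ may fall short of $Q/P$ and only its integer ceiling saves the argument---this is exactly where the strict bound $a\alpha\leq b\beta-1$, and hence the coprimality of $\alpha$ and $\beta$, is indispensable. Finally, equality $W_\Delta(3)=0$ is attained on the family $\Gamma=\langle 3,3a+1\rangle$ with $g=F(\Gamma)$, which shows that $\mu_{\Delta,2}=3$ is sharp.
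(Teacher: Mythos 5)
Your proposal is correct, and it shares its first half with the paper's proof: both reduce $W_{\Delta}(3)=W(g)+\delta(\Delta)\geq 0$ to an explicit arithmetic inequality in the lattice coordinates of $g=\alpha\beta-a\alpha-b\beta$, using $c(\Delta)=c(\Gamma)-\mu$ and $\delta(\Delta)=\delta(\Gamma)+ab-\mu$ with $\mu=\min(a\alpha,b\beta)$, so that the claim becomes $(\alpha-1)(\beta-1)+6ab\geq 4\mu$. The two arguments diverge in how this inequality is settled. The paper first disposes of the gaps in the two triangles (where $W(g)\geq 0$ already), then invokes the bounds $1\leq a\leq\beta/2$, $1\leq b\leq\alpha/2$ to claim $2a\alpha-3ab\leq\alpha\beta/4\leq\delta(\Gamma)$ for $3\leq\alpha<\beta$, treating $\alpha=2$ separately. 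You instead keep the exact constraint supplied by coprimality, namely $a\alpha\leq b\beta-1$ when $\mu=a\alpha$, reduce to the polynomial inequality $\beta P\geq Q$, and verify it via the two factorizations $(\alpha+1)P-bQ=\alpha(\alpha-2b-1)(\alpha-3b+1)$ and $3P-Q=2(\alpha-3b)(\alpha-3b+1)$ (both of which I have checked) together with the integrality of $\beta$. This buys two things: it identifies the equality locus $\Gamma=\langle 3,3a+1\rangle$, $g=F(\Gamma)$, thereby certifying $\mu_{\Delta,2}=3$ rather than merely $\mu_{\Delta,2}\leq 3$; and it bypasses the paper's intermediate estimate $2a\alpha-3ab\leq\alpha\beta/4$, which does not follow from $a\leq\beta/2$, $b\leq\alpha/2$ alone and really requires the syzygy constraint $a\alpha\leq b\beta$ that your argument makes explicit. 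Two small repairs are needed. First, in the step $\beta\geq(\alpha+1)/b\geq Q/P$ you divide by $P=\alpha^2-(4b+1)\alpha+6b^2$, which can vanish; under $\alpha\geq 2b+1$ this happens only at $(\alpha,b)=(3,1)$, where $Q=9-15+6=0$ as well, so $\beta P\geq Q$ survives, but the degenerate case should be stated. Second, $g(\Delta)=\delta(\Gamma)-ab$ should read $g(\Delta)=g(\Gamma)-ab$ (the two agree here only because $\langle\alpha,\beta\rangle$ is symmetric), and the passage to $W_{\Delta}(3)=\delta(\Gamma)+3ab-2\mu$ silently uses $c(\Gamma)=2\delta(\Gamma)$, i.e.\ the symmetry of a two-generated semigroup; this hypothesis deserves explicit mention since it is exactly what converts $3\delta(\Gamma)-c(\Gamma)$ into $\delta(\Gamma)$.
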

\begin{proof}
	 Let \(\Delta=[0,g]\) be the \(\Gamma\)--semimodule associated to a gap \(g=\alpha\beta-a\alpha-b\beta\).
	 First of all, let us denote by \(\mathcal{T}_{r}\) the set of points of \(\mathcal{L}\) inside the triangle delimited by the \(x\)--axis, the line \(x=\lfloor\frac{\beta}{2}\rfloor\) and the diagonal \(\alpha\beta=x\alpha+y\beta\) and  \(\mathcal{T}_{u}\) the set of points of \(\mathcal{L}\) inside the triangle delimited by the \(y\)--axis, the line \(y=\lfloor\frac{\alpha}{2}\rfloor\) and the diagonal \(\alpha\beta=x\alpha+y\beta\).
\medskip
	 
	 Recall that, by \cite[Proof of Theorem 4.5]{almiyano-tetris} (see also Lemma \ref{lem:sym}), the values of \(W_{\Delta}(2)\) are
	 \[\{0\}\cup\{\pm W(g):\;g\in\mathcal{T}_r\cup\mathcal{T}_{u}\}.\]
	 Moreover, by \cite[Lemma 4.1]{almiyano-tetris}, \cite[Proposition 4.8]{almiyano-tetris} and Proposition \ref{prop:wilfgap-tetris},
	 \begin{enumerate}
	 	\item If \(g\in\mathcal{T}_{u}\) then \(-W(g)=a\alpha-2ab<0\).
	 	\item If \(g\in\mathcal{T}_{r}\) then \(-W(g)=b\beta-2ab<0\).
	 \end{enumerate}
 	Now, let us show that \(-W(g)\leq\delta(\Delta)\) for any \(g\in\mathbb{N}\setminus\Gamma\). Since \(\delta(\Delta)>0\) it is enough to prove it for those \(g\notin\mathcal{T}_{u}\cup\mathcal{T}_{r}\). 
\medskip
 	
 	First of all, we have the following expression for \(\delta(\Delta_{[0,g]})\)  given by \cite[Proposition 3.2]{almiyano-tetris}:
 	$$
 	\delta(\Delta_{[0,g]})=c(\Delta_{[0,g]})-\delta(\Gamma)+ab.
 	$$
 	
 	Let us consider \(g\notin\mathcal{T}_{u}\cup\mathcal{T}_{r}\), then \(g=\alpha\beta-a\alpha-b\beta\) with \(1\leq a\leq\beta/2\) and \(1\leq b\leq \alpha/2\). Let us assume that \(\mathrm{min}\{\Gamma\cap(\Gamma+g)\}=\alpha\beta-b\beta\). In such a case, we know by \cite[Theorem 3.1]{almiyano} that \(c(\Delta_{[0,g]})=c(\Gamma)-a\alpha\). Therefore, the previous expression reads:
 		$$
 	\delta(\Delta_{[0,g]})=c(\Gamma)-a\alpha-\delta(\Gamma)+ab=\delta(\Gamma)-a\alpha+ab.
 	$$
 	
 	 Therefore
 	 \[
	 -W(g)<\delta(\Delta_{[0,g]})\Longleftrightarrow a\alpha-2ab<\delta(\Gamma)-a\alpha+ab\Longleftrightarrow 2a\alpha-3ab<\delta(\Gamma).
	 \]
 	 Since \(g=\alpha\beta-a\alpha-b\beta\) with \(1\leq a\leq\beta/2\) and \(1\leq b\leq \alpha/2\), it is easy to check that:
 	 \[
	 2a\alpha-3ab\leq \frac{\alpha\beta}{4}\leq\delta(\Gamma)=\frac{\alpha\beta-\alpha-\beta+1}{2}\Longleftrightarrow \alpha\beta-2(\alpha+\beta)+2\geq0.
	 \]
 	 Now, the last inequality is true if \(3\leq\alpha<\beta.\) Similarly, we obtain the same last inequality and hence the same final condition if we consider \(\mathrm{min}\{\Gamma\cap(\Gamma+g)\}=\alpha\beta-a\alpha\) instead of \(\mathrm{min}\{\Gamma\cap(\Gamma+g)\}=\alpha\beta-b\beta\).
 \medskip
 	
 	Hence, let us consider \(\Gamma=\langle\alpha,\beta\rangle\) with \(3\leq\alpha<\beta.\) Then, we have that any \(g\in\mathbb{N}\setminus\Gamma\) fulfills \(-W(g)<\delta(\Delta)\). Thus,
 	\[
	W_{\Delta_{[0,g]}}(3)=W(g)+\delta(\Delta_{[0,g]})\geq W(g)-W(g)=0.
	\]
 	
 	To finish the proof it remains to show that the statement of the theorem is also true for the case \(\Gamma=\langle 2,\beta\rangle\) with \(\gcd(\alpha,\beta)=1\). But for this case \(W(g)=0\) for all \(g\in\mathbb{N}\setminus\Gamma\) by \cite[Theorem 3.4]{almiyano-tetris}.
\end{proof}
\begin{rem}
	Let us clarify the assertion of the proof where we claim that if \(3\leq \alpha<\beta\) then \(\alpha\beta-2(\alpha+\beta)+2\geq 0\). Let us consider the real variable function \(F(x,y)=xy-2(x+y)+2\). 
	The function \(F(x,y)\) has a saddle point at \((2,2)\) and it is positive within the region 
	$$
	(x-2)(y-2)\geq2.
	$$
	
	 This can be easily seen if we look at the surface defined by \(F(x,y)-z=0\) is a hyperbolic paraboloid which is represented in Figure \ref{fig:silla}. 
	
	\begin{figure}[H]
		\includegraphics[scale=0.45]{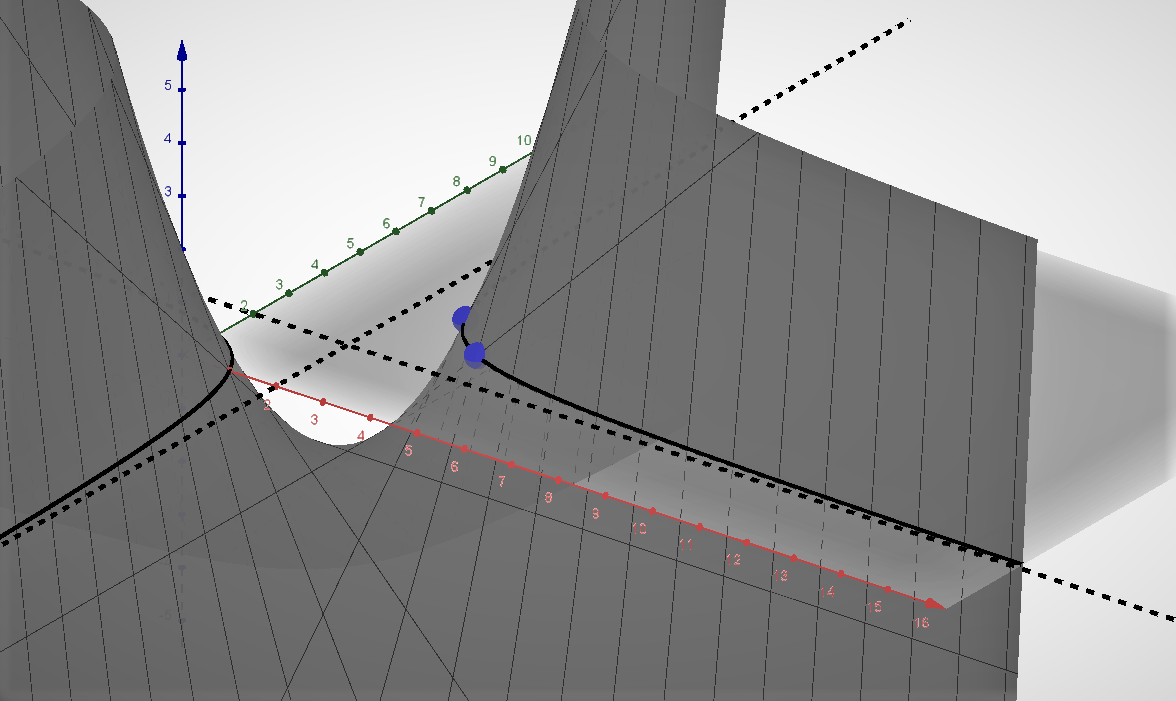}
		\caption{Representation of the surface \(xy-2(x+y)+2\).}
		\label{fig:silla}
	\end{figure}
	Since we are working with a numerical semigroup, we only need to take care about the values of \(F(x,y)\) with \(x\geq 0\), \(y\geq 0\) and \(x,y\in\mathbb{N}\). Observe that under those conditions the corresponding branch of the hyperbola \((x-2)(y-2)=2\) has only two points with \((x,y)\in\mathbb{N}^2\), namely $(3,4)$ and $(4,3)$, see Figure \ref{fig:sillaproy}. Moreover, since this hyperbola is symmetric with respect to bisection of the first positive quadrant, it is enough to look for points with \(x\leq y\). Therefore, any point \((x,y)\in\mathbb{N}^2\) such that \(3\leq x< y\) represents a semigroup \(\Gamma=\langle x,y \rangle\) satisfying the required inequality.
	
	\begin{figure}[H]
		\includegraphics[scale=0.45]{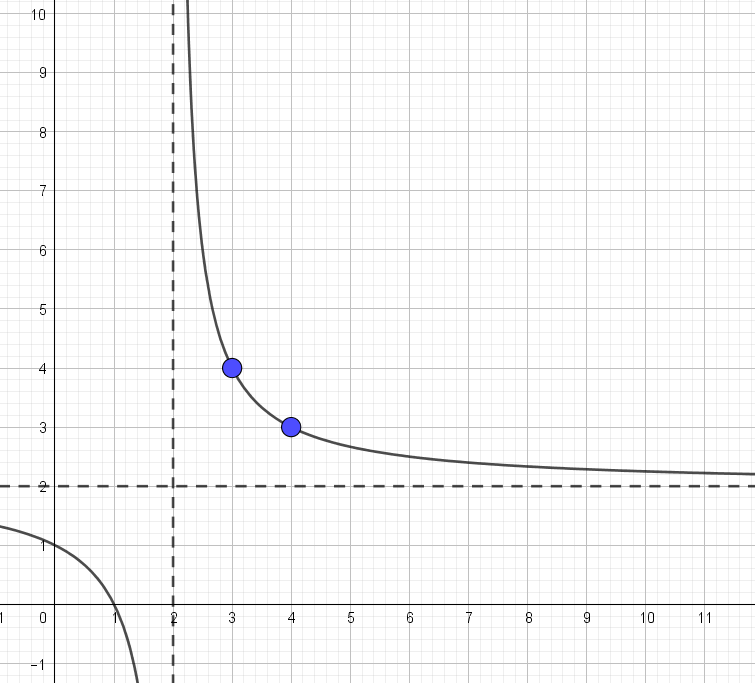}
		\caption{Section of the surface $xy-2(x+y)+2$ with the plane $z=0$.}
		\label{fig:sillaproy}
	\end{figure}
\end{rem}

Theorem \ref{thm:final} shows that the items in Question \ref{extensionwilfsemimodules} are far from being trivial. This is because ---even if we have a nice description for the Wilf function--- we need to exploit the constitutive properties of the gaps of the numerical semigroups, such as an adequate expression for them in terms of the minimal generators of the semigroup. It seems therefore to be a challenge to even guess the candidates for the invariants introduced in Question \ref{extensionwilfsemimodules}.
\medskip

In conclusion, we think that the issues presented in Question \ref{extensionwilfsemimodules} may offer a fruitful research topic to be investigated in the future.

\end{document}